\documentclass{amsart}

\usepackage[T1]{fontenc}
\usepackage{amscd,amssymb}
\usepackage{hyperref}

\allowdisplaybreaks[2]

\numberwithin{equation}{section}

\newtheorem{dummy}{dummy}[section]
\newtheorem{lemma}[dummy]{Lemma}
\newtheorem{theorem}[dummy]{Theorem}

\newtheorem{proposition}[dummy]{Proposition}

\theoremstyle{definition}
\newtheorem*{acknowledgments}{Acknowledgments}

\setcounter{tocdepth}{2}

\newcommand{\C}{\mathbb{C}}
\newcommand{\R}{\mathbb{R}}
\newcommand{\Z}{\mathbb{Z}}

\newcommand{\cL}{\mathcal{L}}
\newcommand{\cE}{\mathcal{E}}
\newcommand{\cO}{\mathcal{O}}
\newcommand{\cF}{\mathcal{F}}
\newcommand{\cI}{\mathcal{I}}
\newcommand{\cM}{\mathcal{M}}

\newcommand{\Aut}{\mathrm{Aut}}
\newcommand{\Sym}{\mathrm{Sym}}
\newcommand{\Pic}{\mathrm{Pic}}

\newcommand{\Hom}{\mathrm{Hom}}
\newcommand{\rk}{\mathrm{rk}}
\renewcommand{\deg}{\mathrm{deg}}

\newcommand{\si}{\sigma}
\newcommand{\Xsi}{X^\sigma}
\newcommand{\lra}{\longrightarrow}
\newcommand{\lmt}{\longmapsto}
\newcommand{\ov}[1]{\overline{#1}}
\newcommand{\os}[1]{\overline{\sigma^*{#1}}}
\newcommand{\CP}{\mathbb{C}\mathbf{P}}
\newcommand{\RP}{\mathbb{R}\mathbf{P}}
\newcommand{\ModC}{\mathcal{M}_{X}(r,d)}
\newcommand{\ModCodd}{\mathcal{M}_{X}(r,2e+1)}
\newcommand{\ModCeven}{\mathcal{M}_{X}(r,2e)}
\newcommand{\U}{\mathbf{U}}
\newcommand{\T}{\mathbf{T}}
\newcommand{\W}{\mathbf{W}}

\renewcommand{\phi}{\varphi}

\begin{document}

\title{Vector bundles over a {R}eal elliptic curve}
\author{Indranil Biswas}
\address{School of Mathematics, Tata Institute of Fundamental Research, Homi Bhabha Rd, Mumbai 400005, India.}
\email{indranil@math.tifr.res.in}
\author{Florent Schaffhauser}
\address{Departamento de Matem\'aticas, Universidad de Los Andes, Bogot\'a, Colombia.}
\email{florent@uniandes.edu.co}
\subjclass[2000]{14H52,14H60,14P25}

\keywords{Elliptic curves, vector bundles on curves, topology of real algebraic
varieties}

\date{\today}

\begin{abstract}
Given a geometrically irreducible smooth projective curve of genus $1$ defined over
the field of real numbers, and a pair of integers $r$ and $d$, we determine the isomorphism class
of the moduli space of semi-stable vector bundles of rank $r$ and degree $d$ on the
curve. When $r$ and $d$ are coprime, we describe the topology of the real locus and give a modular interpretation of its points. We also study, for arbitrary rank and degree, the moduli space of indecomposable vector bundles of rank $r$ and degree $d$, and determine its isomorphism class as a real algebraic variety.
\end{abstract}

\maketitle

\tableofcontents

\section{Introduction}

\subsection{Notation}
In this paper, a \textit{real elliptic curve} will be a triple $(X,x_0,\si)$ where 
$(X,x_0)$ is a complex elliptic curve (i.e.,\ a compact connected Riemann surface of 
genus $1$ with a marked point $x_0$) and $\si:X\lra X$ is an anti-holomorphic 
involution (also called a real structure). We do not assume that $x_0$ is fixed under 
$\si$. In particular, $\Xsi\,:=\,\mathrm{Fix}(\si)$ is allowed to be empty.

The g.c.d.\ of two integers $r$ and $d$ will be denoted by $r\wedge d$.

In the introduction, we omit the definitions of stability and semi-stability of vector 
bundles, as well as that of real and quaternionic structures; all these definitions 
will be recalled in Section \ref{semi-stable_bundles}.

\subsection{The case of genus zero}

Vector bundles over a real Riemann surface of genus $g\geq 2$ have been studied from various points of view in the past few years: moduli spaces of real and quaternionic vector bundles were introduced through gauge-theoretic techniques in \cite{BHH}, then related to the real points of the usual moduli variety in \cite{Sch_JSG}. In genus $0$, there are, up to isomorphism, only two possible real Riemann surfaces: the only compact Riemann surface of genus $0$ is the Riemann sphere $\CP^1$ and it can be endowed either with the real structure $[z_1:z_2]\lmt [\ov{z_1}:\ov{z_2}]$ or with the real structure $[z_1:z_2]\lmt [-\ov{z_2}:\ov{z_1}]$. The real locus of the first real structure is $\RP^1$ while the real locus of the second one is empty. Now, over $\CP^1$, two holomorphic line bundles are isomorphic if and only if they have the same degree and, by a theorem due to Grothendieck (\cite{Grot_P1}), any holomorphic vector bundle over the Riemann sphere is isomorphic to a direct sum of line bundles. So, over $\CP^1$, the only stable vector bundles are the line bundles, a semi-stable vector bundle is necessarily poly-stable and any vector bundle is isomorphic
to a direct sum of semi-stable vector bundles, distinguished by their respective slopes
and ranks. In particular, if $\cE$ is semi-stable of rank $r$ and degree $d$, then $r$ divides $d$ and $$\cE \simeq \cO(d/r) \oplus \cdots \oplus \cO(d/r),$$ where $\cO(1)$ is the positive-degree generator of the Picard group of $\CP^1$ and $\cO(k)$ is its $k$-th tensor power. This means that the moduli space of semi-stable vector bundles of rank $r$ and degree $d$ over $\CP^1$ is 
$$\mathcal{M}_{\CP^1}(r,d) = \left\{
\begin{array}{ccc}
\{\mathrm{pt}\} & \mathrm{if}& r\,|\,d,\\
\emptyset & \mathrm{if}& r\not|d.
\end{array}\right.$$ Assume now that a real structure $\si$ has been given on $\CP^1$. Then, if $\cL$ is a holomorphic line bundle of degree $d$ over $\CP^1$, it is isomorphic to its Galois conjugate $\os{\cL}$, since they have the same degree. This implies
that $\cL$ is either real or quaternionic. Moreover, this real (respectively,\
quaternionic) structure is unique up to real (respectively,\ quaternionic)
isomorphism (see Proposition \ref{self_conj_stable_bundles}). If the real structure
$\si$ has real points, then quaternionic bundles must have even rank. Thus, when
$\mathrm{Fix}(\si) \neq \emptyset$ in $\CP^1$, any line bundle, more generally any
direct sum of holomorphic line bundles, admits a canonical
real structure. Of course, given a real vector bundle of the
form $(\cL\oplus\cL,\tau\oplus\tau)$, where $\tau$ is a real structure on the line bundle $\cL$, one
can also construct the quaternionic structure
$\begin{pmatrix} 0 & -\tau \\ \tau & 0 \end{pmatrix}$ on $\cL\oplus\cL$.
Note that the real vector bundle $(\cL\oplus\cL \, ,\begin{pmatrix} 0 & \tau
\\ \tau & 0 \end{pmatrix})$ is isomorphic to $(\cL\oplus\cL,\tau\oplus\tau)$.
When $\CP^1$ is equipped with its real structure with no real points, a given
line bundle $\cL$ of degree $k$ is again necessarily self-conjugate, so it has to
be either real or quaternionic but now real line bundles must have even degree
and quaternionic line bundles must have odd degree (\cite{BHH}), so $\cL$ admits a
canonical real structure if $k$ is even and a canonical quaternionic structure if $k$
is odd. Consequently, when $\mathrm{Fix}(\si)=\emptyset$ in $\CP^1$, semi-stable
holomorphic vector bundles of rank $r$ and degree $d=rk$ over $\CP^1$ admit a canonical
real structure if $k$ is even and a canonical quaternionic structure if $k$ is odd.

\subsection{Description of the results}

The goal of the present paper is to analyze that same situation in the case of real 
Riemann surfaces of genus one. In particular, we completely identify the moduli space 
of semi-stable holomorphic vector bundles of rank $r$ and degree $d$ as a real 
algebraic variety (Theorem \ref{moduli_space_over_R} below). Our main references are 
the papers of Atiyah (\cite{Atiyah_elliptic_curves}) and Tu (\cite{Tu}). In what 
follows, we denote by $(X,x_0)$ a complex elliptic curve and by $\ModC$ the moduli 
space of semi-stable vector bundles of rank $r$ and degree $d$ over $X$, i.e.,\ the 
set of $S$-equivalence classes of semi-stable holomorphic vector bundles of rank $r$ 
and degree $d$ over $X$ (\cite{Seshadri}). Since $(X,x_0)$ is an elliptic curve, the 
results of Atiyah show that any holomorphic vector bundle on $X$ is again (as in 
genus $0$) a direct sum of semi-stable vector bundles (see Theorem \ref{Tu_obs}) but now 
there can be semi-stable vector bundles which are not poly-stable (see 
\eqref{ses_defining_F_h}) and also there can be stable vector
bundles of rank higher than $1$. 
Moreover, the moduli space $\ModC$ is a non-singular complex algebraic variety of 
dimension $h:=r\wedge d$. As a matter of fact, $\ModC$ 
is isomorphic, as a complex algebraic variety, to the $(r\wedge d)$-fold symmetric 
product $\Sym^{r\wedge d}(X)$ of the complex elliptic curve $X$ and it contains 
stable bundles if and only if $r\wedge d=1$, in which case all semi-stable bundles 
are in fact stable. Let now $\si:X\lra X$ be a real structure on $X$ (recall that the 
marked point $x_0$ is not assumed to be fixed under $\si$). Then the map
$\cE\lmt\os{\cE}$ induces a real structure, again 
denoted by $\si$, on $\ModC$, since it preserves the rank, the degree and the 
S-equivalence class of semi-stable vector bundles (\cite{Sch_JSG}). Our main result 
is then the following, to be proved in Section \ref{real_structure_on_mod_space}.

\begin{theorem}\label{moduli_space_over_R}
Let $h:=r\wedge d$. Then, as a real algebraic variety, $$\ModC \simeq_{\R} \left\{
\begin{array}{cl}
\Sym^h (X) & \mathrm{if}\ \Xsi\neq\emptyset,\\
\Sym^h (X) & \mathrm{if}\ \Xsi=\emptyset\ \mathrm{and}\ d/h\ \mathrm{is\ odd},\\
\Sym^h (\Pic^{\, 0}_X) & \mathrm{if}\ \Xsi=\emptyset\ \mathrm{and}\ d/h\ \mathrm{is\ even}.
\end{array}
\right.$$
\end{theorem}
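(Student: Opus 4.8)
The plan is to transfer the known complex isomorphism $\ModC \simeq \Sym^h(X)$ of Atiyah--Tu to a real isomorphism by carefully tracking the real structure $\si$ through the construction of that isomorphism. The key point is that the Atiyah--Tu isomorphism is essentially given by a determinant-type map: to a semi-stable bundle $\cE$ of rank $r$ and degree $d$ one associates, after tensoring with a fixed line bundle to reduce to the case $0\le d<r$ (or by passing to the associated graded and using the classification of indecomposables), a degree-$h$ effective divisor on $X$, i.e.\ a point of $\Sym^h(X)\simeq\Pic^h_X$. Concretely I would first recall that $\Sym^h(X)$ is a $\CP^{h-1}$-bundle over $\Pic^h_X$ (for $h\ge 1$; it equals $\Pic^h_X\simeq X$ when $h=1$), and that the Abel--Jacobi map identifies this with the projectivization of a suitable direct image sheaf. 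The real structure on $\ModC$ sends $\cE$ to $\os{\cE}$, so under the isomorphism it must send the corresponding divisor/line bundle $\cL$ to $\os{\cL}$; hence the induced real structure on $\Sym^h(X)\simeq\Pic^h_X$ is (up to the $\CP^{h-1}$-fibers, on which it acts fiberwise) the one induced by $\si$ on $\Pic^h_X$.

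Next I would analyze the real structure induced by $\si$ on $\Pic^h_X\simeq X$. Pullback $\cL\mapsto\os{\cL}$ on $\Pic^d_X$ is an anti-holomorphic involution; translating by a fixed real or quaternionic line bundle of degree $h$ identifies it with a real structure on $\Pic^0_X\simeq X$. When $\Xsi\neq\emptyset$, one can choose a real point and a real line bundle of every degree, so the real structure on $\Pic^h_X$ is isomorphic to $(X,\si)$ itself, and the same holds on the total space $\Sym^h(X)$ (the projective fibers carry a real structure coming from a real vector bundle, hence $\RP^{h-1}$-type, but the isomorphism type of the real variety $\Sym^h(X)$ is what it is regardless). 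When $\Xsi=\emptyset$, the dichotomy in the theorem is governed by whether $\Pic^h_X$ carries a $\si$-compatible real point: by the genus-one analysis (as in the genus-zero discussion recalled in the introduction, and to be made precise in Section \ref{semi-stable_bundles}), $X$ admits a real line bundle of degree $h$ iff $h$ is even, while for $h$ odd every degree-$h$ line bundle is quaternionic. Thus when $d/h$ is odd I would produce a real line bundle of degree $d$ to use as the basepoint, making $(\Pic^h_X,\si)\simeq_\R(X,\si)$, whereas when $d/h$ is even the only available basepoint of the right degree to relate things to $\ModC$ forces a "twisted" identification, and $(\ModC,\si)$ becomes isomorphic to $(\Sym^h(\Pic^0_X),\si)$ with the untwisted real structure on $\Pic^0_X$ — which is genuinely non-isomorphic to $(X,\si)$ as a real variety when $\Xsi=\emptyset$ (their real loci differ: one is empty, the other is not).

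Concretely, the steps in order are: (1) recall the complex isomorphism $\ModC\simeq_\C\Sym^h(X)$ and realize $\Sym^h(X)$ as a projective bundle over $\Pic^h_X$; (2) show the real structure $\cE\mapsto\os\cE$ corresponds, under this isomorphism, to a real structure covering the pullback real structure on $\Pic^h_X$; (3) classify, using the results of Section \ref{semi-stable_bundles}, when $X$ carries real versus quaternionic line bundles of a given degree, splitting into the cases $\Xsi\neq\emptyset$, and $\Xsi=\emptyset$ with $d/h$ odd or even; (4) in each case choose an appropriate basepoint line bundle to trivialize the torsor structure and identify $(\Pic^h_X,\si)$ with either $(X,\si)$ or $(\Pic^0_X,\si)$; (5) lift this identification to the projective bundle, using that the relevant direct image sheaf is defined over $\R$ with the corresponding (real or quaternionic) structure, so that the total space $\Sym^h(X)$ acquires the claimed real-variety structure. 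I expect step (2) — pinning down precisely which real/quaternionic structure the direct image sheaf carries, and hence that the real structure on the total space is the "standard" one on $\Sym^h$ of the appropriate curve rather than some exotic twist — to be the main obstacle; it is where one must be careful about the distinction between real and quaternionic line bundles and about the parity of $d/h$, and where the Section \ref{semi-stable_bundles} results on self-conjugate bundles (Proposition \ref{self_conj_stable_bundles}) do the real work.
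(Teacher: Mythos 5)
There is a genuine gap here, in two places. Your developed plan is to start from the complex isomorphism $\ModC\simeq_{\C}\Sym^h(X)$, view $\Sym^h(X)$ as a $\CP^{h-1}$-bundle over $\Pic^{\,h}_X$ via the Abel--Jacobi map, and recover the real structure of the total space from that of the base and the fibers. This cannot work as stated: the real isomorphism type of the total space is not determined by base and fiber alone, and --- more decisively --- any invariant you extract from the base $\Pic^{\,h}_X$ (or from $\Pic^{\,d}_X$ via the determinant) depends only on the parity of $h$ (or of $d$), whereas the theorem's dichotomy is governed by the parity of $d/h$. For instance $(r,d)=(4,2)$ and $(r,d)=(2,4)$ have the same $h=2$ and both have $d$ even, yet the theorem assigns them $\Sym^2(X)$ and $\Sym^2(\Pic^{\,0}_X)$ respectively; so your step (2), which asserts that the induced real structure on the base is the standard one on $\Pic^{\,h}_X$ independently of $d$, cannot be correct, and you yourself flag the lifting to the projective bundle as an unresolved obstacle. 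The missing idea --- which your parenthetical remark about passing to the associated graded gestures at but does not develop --- is to use Tu's isomorphism in the form $\ModC\simeq\Sym^h(\Pic^{\,d/h}_X)$, sending a poly-stable representative $\cE_1\oplus\cdots\oplus\cE_h$ (each $\cE_i$ stable of rank $r/h$ and degree $d/h$) to the unordered tuple $[\det\cE_i]$. This map involves no choice of base point, hence is automatically real since $\det(\os{\cE})=\os{(\det\cE)}$, and the theorem reduces immediately to the rank-one statement (Theorem \ref{line_bdle_case}) applied to $\Pic^{\,d/h}_X$ inside the symmetric product; no analysis of the Abel--Jacobi fibers is needed.

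Second, the line-bundle parity facts you invoke are wrong for genus one. On a genus-one curve with $\Xsi=\emptyset$, real \emph{and} quaternionic line bundles alike must have even degree (Theorem \ref{top_classif}); an odd-degree line bundle is simply not self-conjugate, and $(\Pic^{\,2e+1}_X)^\si=\emptyset$. Your claim that ``for $h$ odd every degree-$h$ line bundle is quaternionic'' imports the genus-zero pattern from the introduction, where quaternionic line bundles have odd degree. Likewise, your plan to ``produce a real line bundle of degree $d$'' when $d/h$ is odd fails whenever $h$ is also odd, since then $d$ is odd. The correct mechanism is: $\Pic^{\,d/h}_X\simeq_{\R}\Pic^{\,1}_X\simeq_{\R}X$ when $d/h$ is odd (tensoring by powers of the degree-two real line bundle attached to the divisor $x_0+\si(x_0)$, and using that the Abel--Jacobi map $X\to\Pic^{\,1}_X$ is always real), versus $\Pic^{\,d/h}_X\simeq_{\R}\Pic^{\,0}_X$ when $d/h$ is even.
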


We recall that $\Pic^{\,0}_X$ is isomorphic to $X$ over $\C$ (via the choice of 
$x_0$) but not over $\R$ when $\Xsi=\emptyset$ because $\Pic^{\,0}_X$ has the
real point corresponding to the trivial line bundle. In contrast, $\Pic^{\,1}_X$ is 
always isomorphic to $X$ over $\R$, as we shall recall in Section \ref{line_bundles}. 
For any $d\in\Z$, the real structure of $\Pic^{\,d}_X$ is induced by the map 
$\cL\lmt\os{\cL}$, while the real structure of the $h$-fold symmetric product 
$\Sym^h(Y)$ of a real variety $(Y,\si)$ is induced by that of $Y$ in the following 
way: $[y_1,\cdots,y_h]\lmt[\si(y_1),\cdots,\si(y_h)]$. Note that, if $r\wedge d=1$, 
then by Theorem \ref{moduli_space_over_R} we have $\ModC\simeq_{\R} X$ if 
$\Xsi\neq\emptyset$ or $d$ is odd, and $\ModC\simeq_{\R} \Pic^{\, 0}_X$ if 
$\Xsi=\emptyset$ and $d$ is even. This will eventually imply the following results on 
the topology and modular interpretation of the set of real points of $\ModC$, 
analogous to those of \cite{Sch_JSG} for real curves of genus $g\geq 2$ (see Section 
\ref{topology} for a proof of Theorem \ref{real_pts_coprime_case}; we point out that 
it will only be valid under the assumption that $r\wedge d=1$, in which case all 
semi-stable bundles are in fact stable, in particular a real point of $\ModC$ is given 
by either a real bundle or a quaternionic bundle, in an essentially 
unique way; see Proposition \ref{self_conj_stable_bundles}).

\begin{theorem}\label{real_pts_coprime_case}
Assume that $r\wedge d=1$.
\begin{enumerate}
\item If $\Xsi\neq\emptyset$, then $\ModC^\si\simeq \Xsi$ has either $1$ or $2$ connected components. Points in either component correspond to real isomorphism classes of real vector bundles of rank $r$ and degree $d$ over $(X,\si)$ and two such bundles $(\cE_1,\tau_1)$ and $(\cE_2,\tau_2)$ lie in the same connected component if and only if $w_1(\cE^{\tau_1})=w_1(\cE_2^{\tau_2})$.
\item If $\Xsi=\emptyset$ and $d=2e+1$, then $\ModCodd^\si\simeq \Xsi$ is empty.
\item If $\Xsi=\emptyset$ and $d=2e$, then $\ModCeven^\si\simeq (\Pic^{\, 0}_X)^\si$ has two connected components, one consisting of real isomorphism classes of real bundles, the other consisting of quaternionic isomorphism classes of quaternionic bundles. These two components become diffeomorphic under the operation of tensoring a given bundle by a quaternionic line bundle of degree $0$.
\end{enumerate} Moreover, in cases $\textstyle{(1)}$ and $\textstyle{(3)}$, each connected component of $\ModC^\si$ is diffeomorphic to $S^1$.
\end{theorem}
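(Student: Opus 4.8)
The plan is to deduce Theorem \ref{real_pts_coprime_case} from Theorem \ref{moduli_space_over_R} specialized to the case $r\wedge d=1$, combined with a careful analysis of the real loci of $X$ and of $\Pic^{\,0}_X$ as real algebraic curves of genus one, together with the modular interpretation furnished by Proposition \ref{self_conj_stable_bundles}. Since $h=1$, Theorem \ref{moduli_space_over_R} gives $\ModC\simeq_{\R} X$ whenever $\Xsi\neq\emptyset$ or $d$ is odd, and $\ModC\simeq_{\R}\Pic^{\,0}_X$ when $\Xsi=\emptyset$ and $d$ is even. Taking real points of these isomorphisms of real varieties immediately yields the homeomorphisms $\ModC^\si\simeq \Xsi$ (cases (1) and (2)) and $\ModCeven^\si\simeq(\Pic^{\,0}_X)^\si$ (case (3)). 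So the substantive content is: (a) describe the topology of $\Xsi$ and of $(\Pic^{\,0}_X)^\si$; (b) identify the components of the moduli space under these homeomorphisms with the modular data ($w_1$ of the real structure, real vs.\ quaternionic type).

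First I would handle the topology. A smooth real curve of genus one has real locus equal to either $\emptyset$, one circle, or two circles; when $\Xsi\neq\emptyset$ this gives the "$1$ or $2$ connected components, each $\simeq S^1$'' assertion of (1), and when $\Xsi=\emptyset$ it gives the emptiness in (2) directly. For (3), I would use the fact (to be recalled in Section \ref{line_bundles}) that when $\Xsi=\emptyset$, $\Pic^{\,0}_X$ is a genus-one real curve possessing a real point (the trivial line bundle), hence its real locus is nonempty; the quotient $\Pic^{\,0}_X/(\pm 1)$ argument, or equivalently the exact sequence relating $\Pic^{\,0}_X$ to $\Pic^{\,d}_X$, forces $(\Pic^{\,0}_X)^\si$ to have exactly two circle components — this is the standard dichotomy that a real elliptic curve with real points has one or two ovals, and the presence of the $2$-torsion real point $\cO_X$ together with the constraint coming from $\Xsi=\emptyset$ pins it to two. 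Then each component is a circle, giving the final sentence of the theorem in case (3) as well.

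Next, the modular interpretation. By Proposition \ref{self_conj_stable_bundles}, since $r\wedge d=1$ every semi-stable bundle is stable and every $\si$-fixed point of $\ModC$ corresponds to a self-conjugate stable bundle, which carries either a real or a quaternionic structure, unique up to isomorphism. In case (1), all $\si$-fixed points are realized by real bundles (quaternionic stable bundles of coprime rank and degree are obstructed when $\Xsi\neq\emptyset$, by the real-points parity argument recalled in Section \ref{semi-stable_bundles}); the two possible components of $\Xsi$ are then distinguished by a $\Z/2$-valued invariant, which on the bundle side is exactly $w_1(\cE^\tau)\in H^1(\Xsi;\Z/2)\cong (\Z/2)^{\#\text{components}}$ — I would argue that the locally constant map sending a real bundle to $w_1$ of its real part separates the components, using that $\ModC^\si$ is connected through a fixed component iff the $w_1$ invariant is constant, as in \cite{Sch_JSG}. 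In case (3), the two components of $(\Pic^{\,0}_X)^\si$ are separated by the real-versus-quaternionic type: one circle parametrizes real line bundles of degree $0$, the other quaternionic ones (the parity obstruction now forbidding a real structure exactly on the quaternionic component), and tensoring by a fixed quaternionic line bundle of degree $0$ — which exists precisely because $\Xsi=\emptyset$ — interchanges the two types and hence the two components, giving the stated diffeomorphism between them.

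The main obstacle I anticipate is step (a)–(b) in case (3): showing that $(\Pic^{\,0}_X)^\si$ has \emph{exactly} two components and matching the component decomposition precisely with the real/quaternionic dichotomy on line bundles. This requires combining the genus-one oval count with the specific arithmetic of $\Pic^{\,0}_X$ versus $\Pic^{\,1}_X$ (which is isomorphic to $X$ over $\R$), and tracking how the real structure $\cL\mapsto\os{\cL}$ on $\Pic^{\,0}_X$ interacts with the $2$-torsion and with the existence of quaternionic degree-$0$ line bundles; by contrast, the topology in cases (1) and (2) and the $w_1$-separation in case (1) are essentially direct transcriptions of the genus $g\geq 2$ arguments of \cite{Sch_JSG}.
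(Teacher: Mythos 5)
Your overall strategy coincides with the paper's: specialize Theorem \ref{moduli_space_over_R} to $h=1$, use the genus-one case of Gross--Harris for the real points of the Picard varieties, and interpret the fixed points via Proposition \ref{self_conj_stable_bundles} together with the topological classification of \cite{BHH}. One remark on emphasis: the step you single out as the main obstacle --- the exact two-component count for $(\Pic^{\,0}_X)^\si$ and its splitting into a real and a quaternionic circle --- is precisely the content of Theorem \ref{GH_case_g_equal_1} (the $g=1$ case of \cite{GH}), which the paper simply invokes; no new argument is required there.

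The one step your sketch leaves genuinely unaddressed is how the modular dichotomy for rank-$r$ bundles (real versus quaternionic in case (3), the value of $w_1$ in case (1)) is matched with the corresponding dichotomy for points of $(\Pic^{\,d}_X)^\si$ under the isomorphism of Theorem \ref{moduli_space_over_R}. That isomorphism is induced by the determinant map (composed with a real twist), and the determinant of a quaternionic bundle is quaternionic only when the rank is odd --- for even rank it is a \emph{real} line bundle. So in case (3) one must observe that $d$ even together with $r\wedge d=1$ forces $r$ odd, whence $\det$ carries real bundles to real line bundles and quaternionic bundles to quaternionic line bundles; this is the only substantive verification in the paper's proof of that case, and without it the identification of the two components of $\ModC^\si$ with the two modular types is not justified. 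Likewise, in case (1) the separation by $w_1(\cE^{\tau})$ is obtained by noting that $w_1(\cE^{\tau})=w_1\bigl((\det\cE)^{\det\tau}\bigr)$ over the $1$-manifold $X^\si$ and then quoting the line-bundle statement of Theorem \ref{GH_case_g_equal_1}, rather than by transcribing the genus $g\geq 2$ argument of \cite{Sch_JSG}. With this determinant-compatibility check inserted, your argument closes and agrees with the paper's.
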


\noindent In particular, the formulae of \cite{LS} (see also \cite{Baird}), giving the mod $2$ Betti numbers of the connected components of $\ModC^\si$ when $r\wedge d=1$ are still valid for $g=1$ (in contrast, when $r\wedge d\neq1$, the formulae of \cite{LS} do not seem to be interpretable in any way since, over an elliptic curve, the dimension of $\ModC$ is $r\wedge d$, not $r^2(g-1)+1$).

In the third and final section of the paper, we investigate the properties of indecomposable vector bundles over real elliptic curves. Recall that a holomorphic vector bundle $\cE$ over a complex curve $X$ is said to be indecomposable if it is not isomorphic to a direct sum of non-trivial holomorphic bundles. When $X$ is of genus $1$, there exists a moduli variety $\cI_X(r,d)$ whose points are isomorphism classes of indecomposable vector bundles of rank $r$ and degree $d$: it was constructed by Atiyah in \cite{Atiyah_elliptic_curves} and revisited by Tu in \cite{Tu}, as will be recalled in Theorems \ref{Atiyah_indecomp_bdles} and \ref{rel_between_stable_and_indecomp}. We will  then see in Section \ref{indecomposable_bdles_over_real_elliptic_curves} that we can extend their approach to the case of real elliptic curves and obtain the following characterization of $\cI_X(r,d)$ as a real algebraic variety.

\begin{theorem}\label{indecomp_bdles_over_R}
Let $(X,x_0,\si)$ be a real elliptic curve. Let $\mathcal{I}_X(r,d)$ be the set of isomorphism classes of indecomposable vector bundles of rank $r$ and degree $d$ and let us set $h:=r\wedge d$, $r':=\frac{r}{h}$, $d':=\frac{d}{h}$. Then:
$$\cI_X(r,d) \simeq_\R \mathcal{M}_X(r',d') \simeq_{\R} \left\{ \begin{array}{cl}
X & \mathrm{if}\ \Xsi\neq \emptyset,\\
X & \mathrm{if}\ \Xsi= \emptyset\ \mathrm{and}\ d'\ \mathrm{is\ odd},\\
\Pic^{\,0}_X & \mathrm{if}\ \Xsi= \emptyset\ \mathrm{and}\ d'\ \mathrm{is\ even}.
\end{array}\right.
$$
\end{theorem}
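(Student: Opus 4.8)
The plan is to establish the first isomorphism $\cI_X(r,d) \simeq_\R \cM_X(r',d')$ over $\R$ and then invoke Theorem \ref{moduli_space_over_R} (applied to the coprime pair $(r',d')$, for which $r'\wedge d' = 1$) to read off the right-hand side. Over $\C$, the key input is the work of Atiyah and Tu (Theorems \ref{Atiyah_indecomp_bdles} and \ref{rel_between_stable_and_indecomp}): there is an isomorphism of complex varieties $\cI_X(r,d) \simeq \cM_X(r',d')$, realized by tensoring with a fixed line bundle and/or pushing forward along an isogeny, together with the operation $\cE \mapsto \cE \otimes F_h$, where $F_h$ is Atiyah's indecomposable bundle of rank $h$ and degree $0$ with a section. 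The crux is to check that this complex isomorphism can be chosen to be defined over $\R$, i.e.,\ that it intertwines the real structure $\cE \mapsto \os{\cE}$ on $\cI_X(r,d)$ with the real structure on $\cM_X(r',d')$.

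First I would recall, from Section \ref{indecomposable_bdles_over_real_elliptic_curves}, the precise form of the Atiyah--Tu bijection and check its behaviour under $\si^*$. The two operations involved are: (i) tensoring by a line bundle $\cL_0 \in \Pic^{\,\delta}_X$ for a suitable $\delta$, which shifts degree while fixing the rank, and (ii) the correspondence $\cE \leftrightarrow \cE\otimes F_h$ relating rank $r'$ bundles to rank $r'h = r$ bundles (this is where $r\wedge d = h$ enters, via $F_h$). Since $\os{\cL_1\otimes\cL_2} = \os{\cL_1}\otimes\os{\cL_2}$ and $\os{\cE\otimes\cF} = \os{\cE}\otimes\os{\cF}$, operation (i) is equivariant provided $\cL_0$ is chosen so that $\os{\cL_0}$ differs from $\cL_0$ by a bundle that acts trivially on $S$-equivalence classes; and for (ii) one needs that $F_h$ is self-conjugate, $\os{F_h}\simeq F_h$. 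The latter should follow from the uniqueness part of Atiyah's classification: $F_h$ is the \emph{unique} indecomposable bundle of rank $h$, degree $0$ with $H^0(X,F_h)\neq 0$, and since $\os{F_h}$ has the same numerical invariants and $H^0(X,\os{F_h}) = \ov{H^0(X,F_h)} \neq 0$, it must be isomorphic to $F_h$. Hence both operations descend to real morphisms, and composing them gives the real isomorphism $\cI_X(r,d) \simeq_\R \cM_X(r',d')$.

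Having established this, the second isomorphism is immediate: apply Theorem \ref{moduli_space_over_R} with $(r,d)$ replaced by $(r',d')$. Since $r' \wedge d' = 1$, that theorem gives $\cM_X(r',d') \simeq_\R X$ when $\Xsi \neq \emptyset$; $\cM_X(r',d') \simeq_\R X$ when $\Xsi = \emptyset$ and $d'/(r'\wedge d') = d'$ is odd; and $\cM_X(r',d') \simeq_\R \Pic^{\,0}_X$ when $\Xsi = \emptyset$ and $d'$ is even. This is exactly the stated trichotomy, so the theorem follows.

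The main obstacle I anticipate is purely in step two of the first paragraph: verifying that the tensoring line bundle $\cL_0$ in operation (i) can be chosen self-conjugate, or at least conjugate-invariant modulo the $\Pic^{\,0}$-action that is invisible at the level of $S$-equivalence classes. When $\Xsi = \emptyset$ this is delicate, because $\Pic^{\,0}_X$ and $\Pic^{\,d}_X$ are not isomorphic over $\R$ for $d$ odd, so the choice of $\delta$ and the parity of $d'$ versus $d$ must be tracked carefully — this is precisely the mechanism producing the case distinction on the parity of $d'$ (rather than $d$) in the statement. Once the real structures are matched on the nose, the rest is formal. I would also remark that the same argument shows $\cI_X(r,d)$ is a smooth real algebraic variety of dimension $1$, since $\cM_X(r',d')$ is.
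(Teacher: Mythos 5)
Your proposal is correct and follows essentially the same route as the paper: reduce to the Atiyah--Tu bijection $\cE'\mapsto\cE'\otimes F_h$ from $\cM_X(r',d')$ to $\cI_X(r,d)$, check it is a real map, and then apply Theorem \ref{moduli_space_over_R} to the coprime pair $(r',d')$. The one point where you genuinely diverge is in how you justify $\os{F_h}\simeq F_h$: you deduce it from Atiyah's uniqueness of the indecomposable rank-$h$, degree-$0$ bundle with $H^0\neq 0$, whereas the paper (Proposition \ref{F_h_is_real}) constructs by induction an actual real structure $\tau_h$ on $F_h$ via the $1$-dimensionality of $H^1(X;F_{h-1}^{\ *})$. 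Your argument is shorter and entirely sufficient for the present theorem; the paper's stronger statement is, however, reused later (Proposition \ref{self_conj_indecomp_bdles}) to produce the structure $\tau'\otimes\tau_h$ on a self-conjugate indecomposable bundle, so the extra work is not wasted there. Finally, the ``main obstacle'' you flag --- choosing a degree-shifting line bundle $\cL_0$ compatibly with the real structures --- is moot: the map of Theorem \ref{rel_between_stable_and_indecomp} is just $\cE'\mapsto\cE'\otimes F_h$ (the degree comes out as $d'h=d$ automatically since $\deg F_h=0$), and no auxiliary $\cL_0$ or isogeny enters; the degree shift only appears in Atiyah's map $\alpha^{\,x_0}_{r,d}$, which is precisely what one must avoid when $\Xsi=\emptyset$. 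The parity-of-$d'$ trichotomy then comes entirely from Theorem \ref{moduli_space_over_R} applied to $(r',d')$, exactly as you say in your second paragraph.
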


\noindent By combining Theorems \ref{real_pts_coprime_case} and \ref{indecomp_bdles_over_R}, we obtain the following topological description of the set of real points of $\cI_X(r,d)$, valid even when $r\wedge d\neq 1$.

\begin{theorem}\label{real_pts_indecomp_bdles}
Denote by $\cI_X(r,d)^\si$ the fixed points of the real structure $\cE\lmt\os{\cE}$ in $\cI_X(r,d)$.
\begin{enumerate}
\item If $\Xsi\neq \emptyset$, then $\cI_X(r,d)^\si\simeq X^\si$ consists of real isomorphism classes of real and indecomposable vector bundles of rank $r$ and degree $d$. It has either one or two connected components, according to whether $\Xsi$ has one or two connected components, and these are distinguished by the Stiefel-Whitney classes of the real parts of the real bundles that they contain.
\item If $\Xsi=\emptyset$ and $\frac{d}{r\wedge d}=2e+1$, then $\cI_X(r,d)^\si\simeq \Xsi$ is empty.
\item If $\Xsi=\emptyset$ and $\frac{d}{r\wedge d}=2e$, then $\cI_X(r,d)^\si\simeq (\Pic^{\, 0}_X)^\si$ has two connected components, one consisting of real isomorphism classes of vector bundles which are both real and indecomposable and one consisting of quaternionic isomorphism classes of vector bundles which are both quaternionic and indecomposable. These two components become diffeomorphic under the operation of tensoring a given bundle by a quaternionic line bundle of degree $0$.
\end{enumerate} Moreover, in cases $\textstyle{(1)}$ and $\textstyle{(3)}$, each connected component of the set of real points of $\cI_X(r,d)$ is diffeomorphic to $S^1$.
\end{theorem}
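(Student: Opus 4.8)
The plan is to combine the isomorphism $\cI_X(r,d)\simeq_\R \cM_X(r',d')$ from Theorem \ref{indecomp_bdles_over_R} with the topological description of real points already obtained in Theorem \ref{real_pts_coprime_case}, applied to the pair $(r',d')$. The key observation is that $r'\wedge d'=1$ by construction, so Theorem \ref{real_pts_coprime_case} applies verbatim to $\cM_X(r',d')$: its real locus is diffeomorphic to $X^\si$ when $X^\si\neq\emptyset$ or $d'$ is odd, and to $(\Pic^0_X)^\si$ when $X^\si=\emptyset$ and $d'$ is even. Since the isomorphism $\cI_X(r,d)\simeq_\R\cM_X(r',d')$ is an isomorphism of real algebraic varieties, it restricts to a diffeomorphism of real loci, and since $d/(r\wedge d)=d'$, the three cases of the present statement match the three cases of Theorem \ref{indecomp_bdles_over_R} and hence of Theorem \ref{real_pts_coprime_case}. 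The assertions about the number of connected components, their distinction by Stiefel–Whitney classes of real parts, the diffeomorphism between the two components in case (3) via tensoring with a quaternionic degree-$0$ line bundle, and each component being diffeomorphic to $S^1$, then all transport directly from Theorem \ref{real_pts_coprime_case}.

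The one genuinely new point — and the step I expect to require the most care — is the \emph{modular interpretation}: I must explain why the real points of $\cI_X(r,d)$ correspond to \emph{indecomposable} bundles of rank $r$ and degree $d$ that are real (respectively quaternionic), and not merely to real (respectively quaternionic) points of the moduli space of the coprime pair $(r',d')$. For this I would invoke the explicit form of the isomorphism $\cI_X(r,d)\simeq_\R\cM_X(r',d')$ constructed in the proof of Theorem \ref{indecomp_bdles_over_R}, which (following Atiyah and Tu) is realized by tensoring an indecomposable bundle of rank $r'$, degree $d'$ with the unique indecomposable bundle $F_h$ of rank $h$, degree $0$ obtained from successive extensions by the trivial line bundle (cf. \eqref{ses_defining_F_h}); equivalently, it sends a stable bundle $\cE'$ of rank $r'$, degree $d'$ to $\cE'\otimes F_h$, which is indecomposable of rank $r$, degree $d$. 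Over $\R$, one checks that $F_h$ is defined over $\R$ and carries a canonical real structure (it is built from $\cO_X$, which is real, by extensions that are themselves defined over $\R$), so that tensoring by $(F_h,\text{can})$ sends real (resp. quaternionic) bundles to real (resp. quaternionic) bundles and is compatible with the real structures $\cE\lmt\os{\cE}$ on both sides. Hence a real point of $\cI_X(r,d)$ is a real isomorphism class of a real indecomposable bundle, obtained as $(\cE'\otimes F_h\,,\,\tau'\otimes\mathrm{can})$ from a real stable bundle $(\cE',\tau')$ of rank $r'$, degree $d'$, and similarly in the quaternionic case; the sign subtleties (real vs. quaternionic) are governed exactly by $d'$ and by whether $X^\si=\emptyset$, as in Proposition \ref{self_conj_stable_bundles} and Theorem \ref{real_pts_coprime_case}.

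Concretely I would organize the proof as follows. First, record that $r'\wedge d'=1$ and apply Theorem \ref{real_pts_coprime_case} to the pair $(r',d')$ to obtain the diffeomorphism type and component structure of $\cM_X(r',d')^\si$ in each of the three cases. Second, using Theorem \ref{indecomp_bdles_over_R}, transport this to $\cI_X(r,d)^\si$ as a diffeomorphism, noting that the isomorphism is one of real algebraic varieties and therefore identifies real loci together with all their topological invariants (number of components, each component $\cong S^1$ in cases (1) and (3)). Third, supply the modular interpretation by analyzing the explicit $F_h$-twist realizing the isomorphism: verify that $F_h$ is defined over $\R$ with its canonical real structure, that twisting by it preserves indecomposability (Atiyah) as well as realness/quaternionicity, and that it intertwines the Galois actions; conclude that real points of $\cI_X(r,d)$ are exactly real isomorphism classes of real indecomposable bundles in cases (1) and (3), with the quaternionic component in case (3) arising as in Theorem \ref{real_pts_coprime_case}, and that case (2) is empty because $\cM_X(r',d')^\si$ is. The only mildly delicate verification is that twisting by $(F_h,\mathrm{can})$ does not secretly turn a real bundle into a quaternionic one or vice versa — but since $F_h$ has degree $0$ and admits a genuine real (not quaternionic) structure, the type is preserved, and the $d'$-parity dichotomy is inherited unchanged from the coprime case.
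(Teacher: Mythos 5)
Your proposal is correct and follows essentially the same route as the paper: the authors likewise obtain Theorem \ref{real_pts_indecomp_bdles} by applying Theorem \ref{real_pts_coprime_case} to the coprime pair $(r',d')$ and transporting along the real isomorphism $\cI_X(r,d)\simeq_\R\cM_X(r',d')$ of Theorem \ref{indecomp_bdles_over_R}, which is realized by tensoring with $F_h$ endowed with its canonical real structure (Proposition \ref{F_h_is_real}). Your discussion of the modular interpretation — that twisting a real (resp.\ quaternionic) stable bundle $(\cE',\tau')$ by $(F_h,\tau_h)$ yields a real (resp.\ quaternionic) indecomposable bundle, so the type is governed by $d'$ exactly as in the coprime case — is precisely the content of the paper's Proposition \ref{self_conj_indecomp_bdles}.
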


\begin{acknowledgments}
The authors thank the Institute of Mathematical Sciences of the National University 
of Singapore, for hospitality while the work was carried out. The first author is supported by J. C. Bose Fellowship. The second 
author acknowledges the support from U.S. National Science Foundation grants DMS 
1107452, 1107263, 1107367 "RNMS: Geometric structures And Representation varieties" 
(the GEAR Network). Thanks also go to the referee for a careful reading of the paper and for suggesting the reference \cite{Baird}.
\end{acknowledgments}

\section{Moduli spaces of semi-stable vector bundles over an elliptic curve}\label{semi-stable_bundles}

\subsection{Real elliptic curves and their Picard varieties}\label{line_bundles}

The real points of Picard varieties of real algebraic curves have been studied for instance by Gross and Harris in \cite{GH}. We summarize here some of their results, specializing to the case of genus $1$ curves.

Let $X$ be a compact connected Riemann surface of genus $1$. To each point $x\in X$, there is associated a holomorphic line bundle $\cL(x)$, of degree $1$, whose holomorphic sections have a zero of order $1$ at $x$ and no other zeros or poles. Since $X$ is compact, the map $X\lra \Pic^{\,1}_X$ thus defined, called the Abel-Jacobi map, is injective. And since $X$ has genus $1$, it is also surjective. The choice of a point $x_0\in X$ defines an isomorphism $\Pic^{\,0}_X\overset{\simeq}{\lra} \Pic^{\,1}_X$, obtained by tensoring by $\cL(x_0)$. In particular, $X\simeq\Pic^{\,1}_X$ is isomorphic to $\Pic^{\,0}_X$ as a complex analytic manifold and inherits, moreover, a structure of Abelian group with $x_0$ as the neutral element.

If $\si:X\lra X$ is a real structure on $X$, the Picard variety $\Pic^{\,d}_X$, whose points represent isomorphism classes of holomorphic line bundles of degree $d$, has a canonical real structure, defined by $\cL\lmt\os{\cL}$ (observe that this anti-holomorphic involution, which we will still denote by $\si$, indeed preserves the degree). Since $\cL(\si(x)) \simeq \os{(\cL(x))}$, the Abel-Jacobi map $X\lra\Pic^{\,1}_X$ is defined over $\R$, meaning that it commutes to the real structures of $X$ and $\Pic^{\,1}_X$. We also call such a map a real map. If $X^\si\neq\emptyset$, we can choose $x_0\in X^\si$ and then $\cL(x_0)$ will satisfy $\os{\cL(x_0)} \simeq \cL(x_0)$ so the isomorphism $\Pic^{\,0}_X \overset{\simeq}{\lra} \Pic^{\,1}_X$ obtained by tensoring by $\cL(x_0)$ will also be defined over $\R$. More generally, by tensoring by a suitable power of $\cL(x_0)$, we obtain real isomorphisms $\Pic^{\,d}_X \simeq \Pic^{\,1}_X$ for any $d\in\Z$. If now $X^\si=\emptyset$, then we actually cannot choose $x_0$ in such a way that $\cL(\si(x_0)) \simeq \cL(x_0)$ (see \cite{GH} or Theorem \ref{GH_case_g_equal_1} below; the reason is that such a line bundle would be either real or quaternionic but, over a real curve of genus $1$ with no real points, real and quaternionic line bundles must have even degree) but we may consider the holomorphic line bundle of degree $2$ defined by the divisor $x_0+\si(x_0)$, call it $\cL$, say. Then $\os{\cL}\simeq \cL$ and, by tensoring by an appropriate tensor power of it, we have the following real isomorphisms $$\Pic^{\,d}_X \simeq_\R \left\{ \begin{array}{ccl}
\Pic^{\,1}_X & \mathrm{if} & d=2e+1,\\
\Pic^{\,0}_X & \mathrm{if} & d=2e.
\end{array}\right.$$ So, when the genus of $X$ is $1$, we have the following result.

\begin{theorem}\label{line_bdle_case}
Let $(X,x_0,\si)$ be a real elliptic curve.
\begin{enumerate}
\item If $X^\si\neq\emptyset$, then for all $d\in \Z$, $$\Pic^{\,d}_X\simeq_\R X.$$
\item If $X^\si = \emptyset$, then $$\Pic^{\,d}_X \simeq_\R \left\{ \begin{array}{ccl}
X & \mathrm{if} & d=2e+1,\\
\Pic^{\,0}_X & \mathrm{if} & d=2e.
\end{array}\right.$$ 
\end{enumerate}
\end{theorem}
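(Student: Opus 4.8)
The plan is to establish both statements by combining the Abel–Jacobi isomorphism $X\simeq\Pic^{\,1}_X$ over $\C$ with a careful bookkeeping of \emph{which} degree-$d$ Picard varieties admit a real map to $\Pic^{\,1}_X$. The underlying principle is that tensoring by a self-conjugate line bundle of degree $k$ induces a real isomorphism $\Pic^{\,d}_X\simeq_\R\Pic^{\,d+k}_X$, so the whole problem reduces to (i) producing enough self-conjugate line bundles of small degree, and (ii) identifying the ``base'' cases $\Pic^{\,0}_X$ and $\Pic^{\,1}_X$ as real varieties.

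\medskip

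First I would treat case (1), when $\Xsi\neq\emptyset$. Here I choose $x_0\in\Xsi$; then $\os{\cL(x_0)}\simeq\cL(\si(x_0))=\cL(x_0)$, so $\cL(x_0)$ is self-conjugate of degree $1$. Tensoring by $\cL(x_0)^{\otimes d}$ gives a real isomorphism $\Pic^{\,0}_X\simeq_\R\Pic^{\,d}_X$ for every $d\in\Z$, and tensoring by $\cL(x_0)$ gives $\Pic^{\,0}_X\simeq_\R\Pic^{\,1}_X$. Since the Abel–Jacobi map $X\lra\Pic^{\,1}_X$ is real (because $\cL(\si(x))\simeq\os{\cL(x)}$) and is an isomorphism of complex varieties, it is an isomorphism of real varieties, so $\Pic^{\,1}_X\simeq_\R X$. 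Chaining these, $\Pic^{\,d}_X\simeq_\R\Pic^{\,1}_X\simeq_\R X$ for all $d$, which is exactly (1).

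\medskip

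Next I would treat case (2), when $\Xsi=\emptyset$. The Abel–Jacobi map still gives $\Pic^{\,1}_X\simeq_\R X$, with no hypothesis on real points, so the odd case $d=2e+1$ will follow once I know $\Pic^{\,d}_X\simeq_\R\Pic^{\,1}_X$ for odd $d$. For this I use the degree-$2$ self-conjugate line bundle $\cL:=\cL(x_0+\si(x_0))$ (self-conjugate since the divisor $x_0+\si(x_0)$ is $\si$-invariant): tensoring by $\cL^{\otimes e}$ gives $\Pic^{\,1}_X\simeq_\R\Pic^{\,2e+1}_X$ and $\Pic^{\,0}_X\simeq_\R\Pic^{\,2e}_X$. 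This covers all cases of (2) modulo the fact, to be recorded from \cite{GH} (or Theorem \ref{GH_case_g_equal_1}), that one genuinely cannot push further: $\Pic^{\,0}_X$ is \emph{not} real-isomorphic to $\Pic^{\,1}_X\simeq_\R X$ when $\Xsi=\emptyset$, because $\Pic^{\,0}_X$ has a real point (the trivial bundle) while $X$ does not, and no self-conjugate odd-degree line bundle exists (such a bundle would be real or quaternionic, but over a genus-$1$ real curve with $\Xsi=\emptyset$ real and quaternionic line bundles have even degree).

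\medskip

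The only real obstacle is the last assertion---that the parity dichotomy in (2) is genuine and the two families $\{\Pic^{\,\mathrm{even}}\}$ and $\{\Pic^{\,\mathrm{odd}}\}$ are distinct real varieties. The positive constructions (tensoring by $\cL(x_0)$ or by $\cL(x_0+\si(x_0))$) are routine; what requires input is the non-existence of a self-conjugate line bundle of odd degree when $\Xsi=\emptyset$, equivalently the statement that $(\Pic^{\,0}_X)^\si\neq\emptyset$ but $(\Pic^{\,1}_X)^\si\simeq\Xsi=\emptyset$. I would derive this from the results of Gross--Harris on real points of Picard varieties of real curves (quoted as Theorem \ref{GH_case_g_equal_1}), together with the elementary observation that a self-conjugate line bundle carries either a real or a quaternionic structure (Proposition \ref{self_conj_stable_bundles}) and that, in the genus-$1$, no-real-point case, the parity constraint from \cite{BHH} forbids odd degree.
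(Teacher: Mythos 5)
Your proposal is correct and follows essentially the same route as the paper: the real Abel--Jacobi identification $X\simeq_\R\Pic^{\,1}_X$, tensoring by the self-conjugate bundle $\cL(x_0)$ with $x_0\in X^\si$ in case (1), tensoring by the degree-$2$ self-conjugate bundle attached to the divisor $x_0+\si(x_0)$ in case (2), and the Gross--Harris/parity obstruction to a self-conjugate line bundle of odd degree when $X^\si=\emptyset$. Nothing is missing.
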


\subsection{Semi-stable vector bundles}

Let $X$ be a compact connected Riemann surface of genus $g$ and recall that the slope of a non-zero holomorphic vector bundle $\cE$ on $X$ is by definition the ratio $\mu(\cE)=\deg(\cE)/\rk(\cE)$ of its degree by its rank. The vector bundle $\cE$ is called stable (respectively,\ semi-stable) if for any non-zero proper sub-bundle $\cF\subset \cE$, one has $\mu(\cF) < \mu(\cE)$ (respectively,\ $\mu(\cF) \leq \mu(\cE)$). By a theorem of Seshadri (\cite{Seshadri}), any semi-stable vector bundle $\cE$ of rank $r$ and degree $d$ admits a filtration whose successive quotients are stable bundles of the same slope, necessarily equal to $d/r$. Such a filtration, called a Jordan-H\"older filtration, is not unique but the graded objects associated to any two such filtrations are isomorphic. The isomorphism class thus defined is denoted by $\mathrm{gr}(\cE)$ and holomorphic vector bundles which are isomorphic to direct sums of stable vector bundles of equal slope are called poly-stable vector bundles. Moreover, two semi-stable vector bundles $\cE_1$ and $\cE_2$ are called $S$-equivalent if $\mathrm{gr}(\cE_1)= \mathrm{gr}(\cE_2)$ and Seshadri proved in \cite{Seshadri} that, when $g\geq 2$, the set of $S$-equivalence classes of semi-stable vector bundles of rank $r$ and degree $d$ admits a structure of complex projective variety of dimension $r^2(g-1)+1$ and is non-singular when $r\wedge d=1$ but usually singular when $r\wedge d\neq 1$ (unless, in fact, $g=2$, $r=2$ and $d=0$). Finally, when $g\geq 2$, there are always stable bundles of rank $r$ and degree $d$ over $X$ (by the theorem of Narasimhan and Seshadri, \cite{NS}, these come from irreducible rank $r$ unitary representations of a certain central extension of $\pi_1(X)$ by $\Z$, determined by $d$ up to isomorphism). If now $g=1$, then the results of Atiyah (\cite{Atiyah_elliptic_curves}) and Tu (\cite{Tu}) show that the set of $S$-equivalence classes of semi-stable vector bundles of rank $r$ and degree $d$ admits a structure of non-singular complex projective variety of dimension $r\wedge d$ (which is consistent with the formula for $g\geq 2$ only when $r$ and $d$ are coprime). But now stable vector bundles of rank $r$ and degree $d$ can only exist if $r\wedge d=1$, as Tu showed following Atiyah's results (\cite[Theorem A]{Tu}). In particular, the structure of poly-stable vector bundles over a complex elliptic curve is rather special, as recalled next.

\begin{proposition}[Atiyah-Tu]\label{poly-stable_bundles}
Let $\cE$ be a poly-stable holomorphic vector bundle of rank $r$ and degree $d$ over a compact connected Riemann surface $X$ of genus $1$. Let us set $h:=r\wedge d$, $r':=\frac{r}{h}$ and $d':=\frac{d}{h}$. Then $\cE \simeq \cE_1\oplus\cdots\oplus \cE_h$ where each $\cE_i$ is a stable holomorphic vector bundle of rank $r'$ and degree $d'$.
\end{proposition}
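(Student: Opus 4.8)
The plan is to unwind the definition of poly-stability and then invoke the fact, recalled above from \cite[Theorem A]{Tu}, that on a curve of genus $1$ a stable bundle necessarily has coprime rank and degree. By definition, a poly-stable bundle $\cE$ of rank $r$ and degree $d$ decomposes as $\cE \simeq \cE_1 \oplus \cdots \oplus \cE_k$ with each $\cE_i$ stable and $\mu(\cE_i) = \mu(\cE) = d/r = d'/r'$; since $r' \wedge d' = 1$, this last ratio expresses $\mu(\cE)$ in lowest terms. It therefore suffices to show that $\rk(\cE_i) = r'$ and $\deg(\cE_i) = d'$ for every $i$, and that $k = h$.

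For the first point, write $\rho_i := \rk(\cE_i)$ and $\delta_i := \deg(\cE_i)$. The equality $\delta_i/\rho_i = d'/r'$ gives $\delta_i\, r' = \rho_i\, d'$, and since $r' \wedge d' = 1$ this forces $r' \mid \rho_i$, say $\rho_i = m_i r'$ with $m_i \geq 1$ an integer; then $\delta_i = m_i d'$, and hence $\rk(\cE_i) \wedge \deg(\cE_i) = m_i (r' \wedge d') = m_i$. As $\cE_i$ is stable over a curve of genus $1$, \cite[Theorem A]{Tu} yields $\rk(\cE_i) \wedge \deg(\cE_i) = 1$, so $m_i = 1$ and indeed $\rk(\cE_i) = r'$, $\deg(\cE_i) = d'$.

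Finally, comparing ranks in $\cE \simeq \bigoplus_{i=1}^{k} \cE_i$ gives $r = \sum_{i=1}^{k} \rk(\cE_i) = k r'$, whence $k = r/r' = h$ (comparing degrees works just as well). This is exactly the asserted decomposition. Once \cite[Theorem A]{Tu} is taken for granted there is essentially no obstacle; the only step requiring a moment's care is the elementary divisibility argument showing that a stable summand of slope $d'/r'$ cannot have rank a proper multiple of $r'$ — and it is precisely there that the genus $1$ hypothesis enters, through Tu's theorem. (For $g \geq 2$ the analogous statement fails, as stable bundles of non-coprime rank and degree do exist.)
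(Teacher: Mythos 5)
Your proof is correct and follows essentially the same route as the paper: unwind the definition of poly-stability, use the fact that stable bundles over an elliptic curve have coprime rank and degree to pin down $\rk(\cE_i)=r'$ and $\deg(\cE_i)=d'$, and count ranks to get $k=h$. The only difference is that you spell out the elementary divisibility step ($r'\mid\rho_i$, hence $m_i=\rk(\cE_i)\wedge\deg(\cE_i)=1$) which the paper leaves implicit.
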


\begin{proof}
By definition, a poly-stable bundle of rank $r$ and degree $d$ is isomorphic to a 
direct sum $\cE_1\oplus \cdots\oplus \cE_k$ of stable bundles of slope 
$\frac{d}{r}=\frac{d'}{r'}$. Since $d'\wedge r'=1$ and each $\cE_i$ is stable of 
slope $\frac{d'}{r'}$, each $\cE_i$ must have rank $r'$ and degree $d'$ (because 
stable bundles over elliptic curves must have coprime rank and degree). Since 
$\mathrm{rk}(\cE_1\oplus\cdots\oplus\cE_k) = kr'=\rk(\cE)=r$, we have indeed $k=h$.
\end{proof}

To understand the moduli space $\ModC$ of semi-stable holomorphic vector bundles of rank $r$ and degree $d$ over a complex elliptic curve $X$, one then has the next two theorems.

\begin{theorem}[Atiyah-Tu]\label{moduli_space_coprime_case}
Let $X$ be a compact connected Riemann surface of genus $1$ and assume that $r\wedge d=1$. Then the determinant map $\det:\ModC\lra\Pic^{\,d}_X$ is an isomorphism of complex analytic manifolds of dimension $1$.
\end{theorem}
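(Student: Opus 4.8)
\emph{Proof proposal.} The plan is to verify that $\det$ is a bijective morphism between two smooth complex projective curves, and then to upgrade this to a biholomorphism. First I would record that, because $r\wedge d=1$, every semi-stable bundle $\cE$ of rank $r$ and degree $d$ on $X$ is in fact stable: the successive quotients of a Jordan--H\"older filtration of $\cE$ are stable of slope $d/r$, hence of coprime rank and degree (\cite[Theorem A]{Tu}), so --- comparing with the reduced fraction $d/r$ --- each has rank $r$ and degree $d$, and there can be only one. Thus $\ModC$ is exactly the set of isomorphism classes of stable bundles of rank $r$ and degree $d$, and by the results of Atiyah and Tu recalled above it carries the structure of a non-singular projective curve. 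The assignment $\cE\lmt\det\cE$ takes values in $\Pic^{\,d}_X$ and defines a morphism of varieties: a (local) universal bundle $\U$ on $X\times\ModC$ has a fibrewise determinant, which is a family of degree-$d$ line bundles on $X$ parametrized by $\ModC$ --- unchanged if $\U$ is twisted by the pullback of a line bundle from $\ModC$ --- and this family classifies $\det$ through the universal property of $\Pic^{\,d}_X$.

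Next I would prove bijectivity. Fix a stable bundle $\cE_0$ of rank $r$ and degree $d$ (which exists by Atiyah), and consider the action of $\Pic^{\,0}_X$ by tensoring, $\cL\cdot\cE=\cE\otimes\cL$, on the set of isomorphism classes of stable bundles of rank $r$ and degree $d$. Since $\det(\cE\otimes\cL)=(\det\cE)\otimes\cL^{\otimes r}$ and the multiplication-by-$r$ map $\cL\lmt\cL^{\otimes r}$ on $\Pic^{\,0}_X\simeq X$ is surjective ($X$ being divisible), the image of $\det$ already contains $(\det\cE_0)\otimes\Pic^{\,0}_X=\Pic^{\,d}_X$; hence $\det$ is surjective, in particular non-constant. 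For injectivity I would invoke Atiyah's theorem (\cite{Atiyah_elliptic_curves}) that this $\Pic^{\,0}_X$-action is transitive and that the stabiliser of any stable bundle of rank $r$ and degree $d$ is the full $r$-torsion subgroup $X[r]=\ker(\cL\lmt\cL^{\otimes r})$, of order $r^2$ --- the inclusion $\mathrm{Stab}(\cE_0)\subseteq X[r]$ being immediate on taking determinants, the reverse inclusion being the substantive point. Granting this, if $\det\cE_1\simeq\det\cE_2$ with $\cE_i=\cE_0\otimes\cL_i$, then $\cL_1^{\otimes r}\simeq\cL_2^{\otimes r}$, so $\cL_1\otimes\cL_2^{-1}\in X[r]$, whence $\cE_1\simeq\cE_2$.

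Finally, a non-constant holomorphic map between connected Riemann surfaces is open with discrete fibres, so if it is moreover bijective it is a homeomorphism whose local normal form at every point is $z\mapsto z$ (it cannot be $z\mapsto z^k$ with $k\geq2$, which is not injective near $0$); its inverse is then holomorphic, so $\det$ is an isomorphism of complex analytic manifolds, necessarily of dimension $1$. I expect the injectivity half of the argument --- equivalently, the identification of the stabiliser of the tensoring action with all of $X[r]$ --- to be the main obstacle: it rests on Atiyah's classification of indecomposable (here, stable) bundles of coprime rank and degree over an elliptic curve, which I take as known, the remaining steps being formal reductions or standard facts about morphisms of curves.
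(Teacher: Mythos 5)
Your argument is correct and is essentially the standard Atiyah--Tu proof; the paper itself states this theorem without proof (attributing it to Atiyah and Tu), and the remark immediately following it singles out exactly the ingredient you isolate as the crux, namely that the stabiliser of the tensoring action of $\Pic^{\,0}_X$ on stable bundles of rank $r$ and degree $d$ is the full $r$-torsion subgroup. Both that fact and the transitivity you invoke are precisely the content of Theorem \ref{Atiyah_indecomp_bdles} (with $h=1$, where indecomposable means stable by Theorem \ref{Tu_obs}), so your proposal is consistent with, and fills in, what the paper leaves to the references.
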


Note that, when $r\wedge d=1$, any semi-stable vector bundle of rank $r$ and degree $d$ is in fact stable (over a curve of arbitrary genus) and that, to prove Theorem \ref{moduli_space_coprime_case}, it is in particular necessary to show that a stable vector bundle $\cE$ of rank $r$ and degree $d$ over a complex elliptic curve $X$ satisfies $\cE\otimes \cL\simeq \cE$ if and only if $\cL$ is an $r$-torsion point in $\Pic^{\,0}_X$ (i.e., $\cL^{\otimes r}\simeq O_X$), a phenomenon which only occurs in genus $1$.

If now $h:=r\wedge d\geq 2$, then we know, by Proposition \ref{poly-stable_bundles}, that a semi-stable vector bundle of rank $r$ and degree $d$ is isomorphic to the direct sum of $h$ stable vector bundles of rank $r'=r/h$ and degree $d'=d/h$. Combining this with Theorem \ref{moduli_space_coprime_case}, one obtains the following result, due to Tu.

\begin{theorem}[{\cite[Theorem 1]{Tu}}]\label{moduli_space_general_case}
Let $X$ be a compact connected Riemann surface of genus $1$ and denote by
$h:=r\wedge d$. Then there is an isomorphism of complex analytic manifolds
$$\begin{array}{ccc}
\ModC & \overset{\simeq}{\lra} & \Sym^h(\Pic^{d/h}_X)\\
\cE \simeq \cE_1\oplus \cdots \oplus \cE_h & \lmt & [\det\cE_i]_{1\leq i\leq h}
\end{array}.$$
\end{theorem}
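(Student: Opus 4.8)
The plan is to produce a canonical bijection of sets $\ModC\longleftrightarrow\Sym^h\big(\mathcal{M}_X(r',d')\big)$, realize it as an isomorphism of complex algebraic varieties, and then transport it across the isomorphism of Theorem \ref{moduli_space_coprime_case}. For the set bijection, recall that by Seshadri's theorem the points of $\ModC$ are the $S$-equivalence classes of semi-stable bundles of rank $r$ and degree $d$, and that each such class contains a unique poly-stable bundle up to isomorphism, namely $\mathrm{gr}(\cE)$; hence the point set of $\ModC$ is identified with the set of isomorphism classes of poly-stable bundles of rank $r$ and degree $d$. By Proposition \ref{poly-stable_bundles} such a bundle is a direct sum $\cE_1\oplus\cdots\oplus\cE_h$ of $h$ stable bundles of rank $r'$ and degree $d'$; since stable bundles are indecomposable, the Krull--Schmidt theorem shows that the multiset $\{[\cE_1],\ldots,[\cE_h]\}$ of their isomorphism classes depends only on $\cE$, and conversely every such multiset arises from a direct sum of representatives. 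This yields the bijection $\ModC\longleftrightarrow\Sym^h\big(\mathcal{M}_X(r',d')\big)$, and composing with the $h$-th symmetric power of the isomorphism $\det\colon\mathcal{M}_X(r',d')\xrightarrow{\ \simeq\ }\Pic^{d'}_X$ of Theorem \ref{moduli_space_coprime_case} produces the stated correspondence $\cE\simeq\cE_1\oplus\cdots\oplus\cE_h\mapsto[\det\cE_i]_{1\le i\le h}$, where $d'=d/h$.

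To see that this is algebraic, I would build the inverse map as a morphism. Because $r'\wedge d'=1$ there are no strictly semi-stable bundles of rank $r'$ and degree $d'$, and $\mathcal{M}_X(r',d')$ is a fine moduli space carrying a universal bundle $\mathcal{U}$ on $\mathcal{M}_X(r',d')\times X$. Pulling $\mathcal{U}$ back along the $h$ partial projections $\mathcal{M}_X(r',d')^h\times X\to\mathcal{M}_X(r',d')\times X$ and taking the direct sum gives a family of poly-stable bundles of rank $r$ and degree $d$ on $X$ parametrized by $\mathcal{M}_X(r',d')^h$; by the coarse moduli property of $\ModC$ this family is classified by a morphism $\Phi\colon\mathcal{M}_X(r',d')^h\to\ModC$. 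The family is invariant, up to isomorphism, under the action of the symmetric group $\mathfrak{S}_h$ permuting the factors, so $\Phi$ is $\mathfrak{S}_h$-invariant and descends to a morphism $\overline{\Phi}\colon\Sym^h\big(\mathcal{M}_X(r',d')\big)\to\ModC$, which on closed points sends a multiset of stable bundles to the class of their direct sum. By the previous paragraph $\overline{\Phi}$ is bijective.

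It then remains to promote the bijective morphism $\overline{\Phi}$ to an isomorphism. Its source $\Sym^h\big(\mathcal{M}_X(r',d')\big)\cong\Sym^h(\Pic^{d'}_X)$ is irreducible and projective, and it is smooth of dimension $h$ because a symmetric power of a smooth curve is smooth; its target $\ModC$ is irreducible, being the bijective image of an irreducible variety, and it is a non-singular projective variety of dimension $h$ as recalled above, hence in particular normal. As a morphism from a complete variety, $\overline{\Phi}$ is proper, hence finite since it is quasi-finite, and it is birational because a bijective morphism of irreducible varieties over $\C$ is birational. A finite birational morphism onto a normal variety is an isomorphism by Zariski's Main Theorem, so $\overline{\Phi}$ is an isomorphism of complex algebraic varieties, and a fortiori of the underlying complex analytic manifolds. (One can also argue directly in the analytic category: $\overline{\Phi}$ is a bijective holomorphic map of compact complex manifolds, hence proper, finite and open, hence a homeomorphism, and by Riemann's extension theorem applied across the analytic branch locus, which has codimension at least $1$, the inverse is holomorphic.) Composing $\overline{\Phi}^{-1}$ with $\Sym^h(\det)$ gives the asserted isomorphism $\ModC\xrightarrow{\ \simeq\ }\Sym^h(\Pic^{d/h}_X)$.

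I expect the real content, hence the main obstacle, to lie in the middle step: one must exploit the fine moduli structure of $\mathcal{M}_X(r',d')$ available in the coprime case to turn the direct-sum operation into an honest morphism $\overline{\Phi}$, and then invoke the smoothness (hence normality) of $\ModC$ together with the elementary smoothness of symmetric powers of curves in order to upgrade a bijective morphism to an isomorphism. By contrast the underlying set-theoretic bijection is essentially formal once Proposition \ref{poly-stable_bundles} and the Krull--Schmidt theorem are in hand.
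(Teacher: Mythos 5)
Your proposal is correct and follows the route the paper itself indicates: the paper gives no proof of this statement (it cites Tu), but points to exactly your combination of Proposition \ref{poly-stable_bundles} with Theorem \ref{moduli_space_coprime_case}, and your additional work of realizing the direct-sum map as a morphism via the universal bundle on $\mathcal{M}_X(r',d')\times X$ and upgrading the resulting bijection by Zariski's Main Theorem supplies precisely what the citation leaves out. One small caution: invoking the non-singularity of $\ModC$ at the final step risks circularity, since in genus $1$ that smoothness is usually deduced from this very theorem --- but your argument only needs normality of the target, which follows directly from the GIT construction of $\ModC$ as a quotient of a smooth parameter scheme, so the gap is cosmetic.
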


\noindent In particular, $\ModC$ has dimension $h=r\wedge d$. Since the choice of a point $x_0\in X$ provides an isomorphism $\Pic^{\,d}_X\simeq_\C X$, we have indeed $\ModC \simeq_\C \Sym^h(X)$. In the next section, we will analyze the corresponding situation over $\R$. But first we recall the basics about real and quaternionic vector bundles.

Let $(X,\si)$ be a real Riemann surface, i.e., a Riemann surface $X$ endowed with a real structure $\si$. A real holomorphic vector bundle over $(X,\si)$ is a pair $(\cE,\tau)$ such that $\cE\lra X$ is a holomorphic vector bundle over $X$ and $\tau:\cE\lra\cE$ is an anti-holomorphic map such that

\begin{enumerate}
\item the diagram
$$\begin{CD}
\cE_1 @>{\tau}>> \cE_2\\
@VVV @VVV\\
X @>{\si}>> X
\end{CD}$$ is commutative;
\item the map $\tau$ is fibrewise $\C$-anti-linear: $\forall v\in\cE$, $\forall \lambda\in\C$, $\tau(\lambda v) =\ov{\lambda}\tau(v)$;
\item $\tau^2=\mathrm{Id}_\cE$.
\end{enumerate}

\noindent A quaternionic holomorphic vector bundle over $(X,\si)$ is a pair $(\cE,\tau)$ satisfying Conditions (1) and (2) above, as well as a modified third condition: (3)' $\tau^2=-\mathrm{Id}_\cE$. A homomorphism $\phi:(\cE_1,\tau_1) \lra (\cE_2,\tau_2)$ between two real (respectively,\ quaternionic) vector bundles is a holomorphic map $\phi:\cE_1\lra\cE_2$ such that

\begin{enumerate}
\item the diagram
$$\begin{CD}
\cE_1 @>{\phi}>> \cE_2 \\
@VVV @VVV \\
X @= X 
\end{CD}$$ is commutative;
\item $\phi\circ\tau_1=\tau_2\circ\phi$.
\end{enumerate}

A real (respectively,\ quaternionic) holomorphic vector bundle is called stable if for any $\tau$-invariant sub-bundle $\cF\subset \cE$, one has $\mu(\cF)<\mu(\cE)$. It is called semi-stable if for any such $\cF$, one has $\mu(\cF)\leq\mu(\cF)$. As shown in \cite{Sch_JSG}, $(\cE,\tau)$ is semi-stable as a real (respectively,\ quaternionic) vector bundle if and only $\cE$ is semi-stable as a holomorphic vector bundle but $(\cE,\tau)$ may be stable as a real (respectively,\ quaternionic) vector bundle while being only poly-stable as a holomorphic vector bundle (when $\cE$ is in fact stable, we will say that $(\cE,\tau)$ is geometrically stable). However, any semi-stable
real (respectively,\ quaternionic) vector bundle admits real (respectively,\ quaternionic) Jordan-H\"older filtrations (where the successive quotients can sometimes be stable in the real sense only) and there is a corresponding notion of poly-stable
real (respectively,\ quaternionic) vector bundle, which turns out to be equivalent to being poly-stable and real (respectively,\ quaternionic). Real and quaternionic vector bundles over a compact connected real Riemann surface $(X,\si)$ were topologically classified in \cite{BHH}. If $X^\si\neq\emptyset$, a real vector bundle $(\cE,\tau)$ over $(X,\si)$ defines in particular a real vector bundle in the ordinary sense $\cE^\tau\lra X^\si$, hence an associated first Stiefel-Whitney class $w_1(\cE^\tau)\in H^1(X^\si;\Z/2\Z)\simeq (\Z/2\Z)^n$, where $n\in\{0,\cdots,g+1\}$ is the
number of connected components of $X^\si$. The topological classification of real and quaternionic vector bundles then goes as follows.
\begin{theorem}[\cite{BHH}]\label{top_classif}
Let $(X,\si)$ be a compact connected real Riemann surface.
\begin{enumerate}
\item If $X^\si\neq\emptyset$, real vector bundles over $(X,\si)$ are classified up to smooth isomorphism by the numbers $r=\rk(\cE)$, $d=\deg(\cE)$ and $(s_1,\cdots,s_n)=w_1(\cE^\tau)$, subject to the condition $s_1+\ldots+s_n=d\ \mathrm{mod}\ 2$. Quaternionic vector bundles must have even rank and degree in this case and are classified up to smooth isomorphism by the pair $(2r,2d)$.
\item If $X^\si=\emptyset$, real vector bundles over $(X,\si)$ must have even degree are classified up to smooth isomorphism by the pair $(r,2d)$. Quaternionic vector bundles are classified up to smooth isomorphism by the pair $(r,d)$, subject to the condition $d+r(g-1)\equiv 0\ (\mathrm{mod}\ 2)$. In particular, if $g=1$, real and quaternionic vector bundles alike must have even degree.
\end{enumerate}
\end{theorem}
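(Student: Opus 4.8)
\emph{Proof proposal.} I would reconstruct the argument of \cite{BHH} as follows. A real (resp.\ quaternionic) holomorphic vector bundle on $(X,\si)$ is in particular a smooth complex vector bundle on $X$ equipped with a $\C$-antilinear lift $\tau$ of $\si$ satisfying $\tau^2=\Id$ (resp.\ $\tau^2=-\Id$), and the holomorphic structure is irrelevant for the smooth classification, so the plan is to turn the statement into an obstruction-theoretic problem on the quotient surface $Y:=X/\si$. Two regimes occur. If $\Xsi\neq\emptyset$, then $Y$ is a compact surface whose boundary $\partial Y$ is the image of $\Xsi$, hence has $n$ circle components $C_1,\dots,C_n$, and $X\to Y$ is a homeomorphism over $\partial Y$ and an unramified double cover over $Y\setminus\partial Y$. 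If $\Xsi=\emptyset$, then $X\to Y$ is an honest unramified double cover, classified by $w_1(Y)\in H^1(Y;\Z/2)$, and $Y$ is a closed non-orientable surface with $\chi(Y)=\chi(X)/2=1-g$; in particular $H^2(Y;\Z)\cong\Z/2$ and $H^1(Y;\Z/2)\cong(\Z/2)^{g+1}$.

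Assume first $\Xsi\neq\emptyset$. A real bundle descends over $Y$ to an ordinary complex vector bundle of rank $r$, which is trivial because $Y$ retracts onto a $1$-complex and $GL_r(\C)$ is connected; the extra datum recorded near each $C_i$ is a real structure on the trivial bundle $\underline{\C}^{r}|_{C_i}$, equivalently the rank-$r$ real vector bundle $\cE^\tau|_{C_i}$, which is classified up to isomorphism by $s_i:=w_1(\cE^\tau|_{C_i})\in H^1(C_i;\Z/2)=\Z/2$. An equivariant obstruction computation on a $\Z/2$-CW structure of $X$ adapted to $\Xsi$ (free cells off $\Xsi$, fixed cells on $\Xsi$) then shows that the smooth isomorphism class of $(\cE,\tau)$ is determined by the triple $\big(r,(s_1,\dots,s_n),d\big)$ and nothing more. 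The congruence $d\equiv s_1+\dots+s_n\pmod 2$ I would obtain by passing to determinants --- reducing the claim to a real line bundle $\cL$ of degree $d$ with $w_1(\cL^\tau)|_{C_i}=s_i$ --- and then counting the zeros of a generic smooth $\tau$-invariant section of $\cL$ (produced by averaging an arbitrary section with its $\tau$-conjugate): zeros off $\Xsi$ occur in $\si$-pairs and contribute evenly to $d$, while a zero along $C_i$ contributes $s_i\bmod 2$; alternatively one may invoke the description of $\pi_0\big((\Pic^{\,d}_X)^\si\big)$ from \cite{GH}. Conversely, every admissible tuple is realised by $\cL\oplus\underline{\C}^{r-1}$ for a real line bundle $\cL$ of degree $d$ with the prescribed $(s_1,\dots,s_n)$, which exists by \cite{GH} precisely when the congruence holds; uniqueness for a fixed tuple is contained in the same obstruction computation (equivalently, in the connectedness of the relevant component of the space of real structures, where a path integrates to a smooth real isomorphism by a gauge-flow argument). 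For quaternionic bundles the same scheme applies once one notes that $\tau$ restricted to a fibre over a point of $\Xsi$ is $\C$-antilinear with square $-\Id$, forcing the rank $2m$ to be even, with the standard quaternionic clutching along $\partial Y$ playing the role of the boundary real structure; evenness of the degree comes from the fact that $\si$ exchanges the two sides of $\partial Y$, so a winding of the clutching data along a $C_i$ contributes to $\deg\cE$ with multiplicity two, and models $(\cF\oplus\cF,\,\text{antidiagonal }\tau)$ with $\cF$ real of rank $m$ and degree $d$ realise each type $(2m,2d)$.

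Assume now $\Xsi=\emptyset$. Here a real (resp.\ quaternionic) bundle on $(X,\si)$ descends to a bundle over $Y$ with structure group $GL_r(\C)\rtimes\Z/2$ whose $\Z/2$-component is $w_1(Y)$, the sign in $\tau^2=-\Id$ being encoded by an extra half-twist along the orientation classes of $Y$; classifying such objects is a twisted-coefficient obstruction problem on the closed non-orientable surface $Y$. For a real line bundle $\cL$ this identification produces a rank-two real bundle $V$ over $Y$ with $w_1(V)=w_1(Y)$, classified by a twisted Euler class in $H^2(Y;\Z_{w_1(Y)})\cong\Z$; pulling back to $X$ multiplies this class by the covering degree $2$, so $\deg\cL$ is even, and the general-rank assertion $(r,2d)$ follows by a transfer/obstruction argument showing that the only integral invariant downstairs doubles under pullback. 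For quaternionic bundles the sign $\tau^2=-\Id$ shifts the relevant obstruction inside $H^2(Y;\Z)\cong\Z/2$ by a term which, by a characteristic-class computation using $\langle w_2(Y),[Y]_2\rangle=\chi(Y)\bmod 2$ together with $\chi(Y)=1-g$, equals $r(g-1)\bmod 2$; hence a quaternionic bundle of rank $r$ and degree $d$ exists exactly when $d+r(g-1)\equiv 0\pmod 2$, and in that range $(r,d)$ is a complete invariant. Specialising $g=1$ kills the correction term, so real and quaternionic bundles alike must then have even degree.

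The step I expect to be the genuine obstacle is, in both regimes, the parity constraint --- the relation $d\equiv s_1+\dots+s_n\pmod 2$ when $\Xsi\neq\emptyset$ and the relation $d+r(g-1)\equiv 0\pmod 2$ when $\Xsi=\emptyset$ --- because this is the one place where the topology of $X$ genuinely enters and where connectedness of the relevant structure groups does not suffice; the cleanest routes are, respectively, the determinant reduction together with a zero count (or a direct appeal to \cite{GH}) and the twisted-coefficient obstruction computation on $Y$ sketched above.
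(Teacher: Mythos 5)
The paper offers no proof of this statement: it is imported verbatim from \cite{BHH}, so there is no internal argument to measure yours against, and your proposal has to be judged on its own. For the case $\Xsi\neq\emptyset$ it has the right shape: quotient surface, reduction to the boundary data $w_1(\cE^\tau)$, and the determinant-plus-zero-counting argument for the congruence $s_1+\cdots+s_n\equiv d\pmod 2$, which is indeed how Gross--Harris prove the line-bundle case. One correction there: a real bundle does not descend to an \emph{ordinary complex} vector bundle on $Y=X/\si$, even near the boundary, because $\tau$ is antilinear; the descended object has structure group $GL_r(\C)\rtimes\Z/2$ (exactly as you say in the second regime), and the disconnectedness of that group is precisely what makes $w_1(\cE^\tau)$ a nontrivial invariant. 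The argument survives, but the sentence as written would prove too much.

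The genuine gap is the one you flag yourself: the parity constraint $d+r(g-1)\equiv 0\pmod 2$ for quaternionic bundles is left to an unexecuted ``twisted-coefficient obstruction computation,'' and it is not clear that the proposed bookkeeping (a shift of a class in $H^2(Y;\Z)\cong\Z/2$ by $r(g-1)$) comes out right. The standard route is analytic rather than obstruction-theoretic and makes all the quaternionic parity statements fall out uniformly: equip $\cE$ with a $\tau$-compatible holomorphic structure (average any Dolbeault operator with its $\tau$-conjugate); then $\tau$ induces $\C$-antilinear maps on $H^0(X,\cE)$ and $H^1(X,\cE)$ squaring to $-\mathrm{Id}$, so both are quaternionic vector spaces and hence even-dimensional over $\C$, and Riemann--Roch gives $d+r(1-g)=h^0-h^1\equiv 0\pmod 2$. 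Combined with your (correct) observation that the fibre over a point of $\Xsi$ is a quaternionic vector space, forcing $r$ even, this yields even rank and even degree when $\Xsi\neq\emptyset$ and the stated congruence when $\Xsi=\emptyset$, with $g=1$ giving the last clause. By contrast, your twisted Euler-class argument for the evenness of $\deg\cL$ for real bundles with $\Xsi=\emptyset$ is sound (pullback along the orientation double cover doubles the twisted Euler class), and that case genuinely needs a topological argument, since $\tau^2=+\mathrm{Id}$ gives no parity information on cohomology.
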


\noindent Theorem \ref{top_classif} will be useful in Section \ref{topology}, for the proof of Theorem \ref{real_pts_coprime_case}.

\subsection{The real structure of the moduli space}\label{real_structure_on_mod_space}

Let first $(X,\si)$ be a real Riemann surface of arbitrary genus $g$. Then the involution $\cE\lmt\os{\cE}$ preserves the rank and the degree of a holomorphic vector bundle and the bundle $\os{\cE}$ is stable (respectively,\ semi-stable) if and only if $\cE$ is. Moreover, if $\cE$ is semi-stable, a Jordan-H\"older filtration of $\cE$ is mapped to a Jordan-H\"older filtration of $\os{\cE}$, so, for any $g$, the moduli space $\ModC$ of semi-stable holomorphic vector bundles of rank $r$ and degree $d$ on $X$ has an induced real structure. Assume now that $g=1$ and let us prove Theorem \ref{moduli_space_over_R}.

\begin{proof}[Proof of Theorem \ref{moduli_space_over_R}] Since, for any vector bundle $\cE$ one has $\det(\os{\cE})=\os{(\det\cE)}$, the map $$\begin{array}{ccc}
\ModC & \overset{\simeq}{\lra} & \Sym^h(\Pic^{d/h}_X)\\
\cE \simeq \cE_1\oplus \cdots \oplus \cE_h & \lmt & [\det\cE_i]_{1\leq i\leq h}
\end{array}$$ of Theorem \ref{moduli_space_general_case} is a real map. If $X^\si\neq\emptyset$, then by Theorem \ref{line_bdle_case}, we have $$\Pic^{\,d}_X\simeq_\R\Pic^{\,0}_X\simeq_\R X$$ so $\ModC \simeq_\R \Sym^h(X)$ in this case. And if $X^\si=\emptyset$, we distinguish between the cases $d=2e+1$ and $d=2e$ to obtain, again by Theorem \ref{line_bdle_case}, that $$\ModC\simeq_\R \left\{ \begin{array}{ccl}
\Sym^h(X) & \mathrm{if} & d/h\ \mathrm{is\ odd},\\
\Sym^h(\Pic^{\,0}_X) & \mathrm{if} & d/h\ \mathrm{is\ even},
\end{array}\right.$$ which finishes the proof of Theorem \ref{moduli_space_over_R}.
\end{proof}

Let us now focus on the case $d=0$, where there is a nice alternate description of the 
moduli variety in terms of representations of the fundamental group of the elliptic 
curve $(X,x_0)$. Since $\pi_1(X,x_0)\simeq \Z^2$ is a free Abelian group on two 
generators, a rank $r$ unitary representation of it is entirely determined by the data 
of two commuting unitary matrices $u_1,u_2$ in $\mathbf{U}(r)$ (in particular, such a 
representation is never irreducible unless $r=1$) and we may assume that these two 
matrices lie in the maximal torus $\T_r\subset \mathbf{U}(r)$ consisting of diagonal 
unitary matrices. The Weyl group of $\T_r$ is $\W_r\simeq\mathfrak{S}_r$, the symmetric group on 
$r$ letters, and one has 
\begin{equation}\label{torus_reduction}\Hom(\pi_1(X,x_0);\U(r))/\U(r) \simeq 
\Hom(\pi_1(X,x_0);\T_r)/\W_r.\end{equation} Note that since $\pi_1(X,x_0)$ is Abelian, 
there is a well-defined action of $\si$ on it even if $x_0\notin X^\si$: a loop $\gamma$ 
at $x_0$ is sent to the loop $\si\circ\gamma$ at $\si(x_0)$ then brought back to $x_0$ 
by conjugation by an arbitrary path between $x_0$ and $\si(x_0)$. Combining this with 
the involution $u\lmt\ov{u}$ of $\U(r)$, we obtain an action of $\si$ on 
$\Hom(\pi_1(X,x_0);\U(r))$, defined by sending a representation $\rho$ to the representation 
$\si\rho\si$. This action preserves the subset $\Hom(\pi_1(X,x_0);\T_r)$ and is 
compatible with the conjugacy action of $\U(r)$ in the sense that 
$\si(\mathrm{Ad}_u\,\rho)\si=\mathrm{Ad}_{\si(u)}\, (\si\rho\si^{-1})$, so it induces an 
involution on the representation varieties $\Hom(\pi_1(X,x_0);\U(r))/\U(r)$ and 
$\Hom(\pi_1(X,x_0);\T_r)/\W_r$ and the bijection \eqref{torus_reduction} is equivariant 
for the actions just described. By the results of Friedman, Morgan and Witten in 
\cite{FMW} and Laszlo in \cite{Laszlo}, this representation variety is in fact 
isomorphic to the moduli space $\cM_X(r,0)$. Moreover, the involution $\cE\lmt\os{\cE}$ 
on bundles correspond to the involution $\rho\lmt\si\rho\si$ on unitary representations. 
Moreover, $$\T_r \simeq \underbrace{\U(1)\times\cdots\times\U(1)}_{r\ \mathrm{times}} 
\simeq \U(1)\otimes_{\Z}\Z^r$$ as Abelian Lie groups, where $\Z^r$ can be interpreted as 
$\pi_1(\T_r)$. In particular, the Galois action induced on $\Z^r$ by the complex 
conjugation on $\T_r$ is simply $(n_1,\cdots,n_r)\lmt(-n_1,\cdots,-n_r)$ and the 
isomorphism $\T_r\simeq\U(1)\otimes\Z^r$ is equivariant with respect to these natural 
real structures. Finally, the bijection $$\Hom(\pi_1(X,x_0);\T_r) \simeq 
\Hom(\pi_1(X,x_0);\U(1))\otimes\Z^r$$ is also equivariant and the representation variety 
$\Hom(\pi_1(X,x_0);\U(1))$ is isomorphic to $\Pic^{\,0}_X$ as a real variety. We have 
thus proved the following result, which is an analogue over $\R$ of one of the results 
in \cite{FMW,Laszlo}.

\begin{theorem}\label{moduli_space_d_equal_0}
Let $(X,x_0,\si)$ be a real elliptic curve. Then the map $$\begin{array}{ccc}
(\Pic^{\,0}_X\otimes_{\Z}\Z^r) \simeq \big(\Pic^{\,0}_X)^r & \lra & \cM(r,0)\\
(\cL_1,\cdots,\cL_r) & \lmt & \cL_1\oplus\cdots\oplus\cL_r
\end{array}$$ induces an isomorphism $$\cM_X(r,0)\simeq_{\R} (\Pic^{\,0}_X\otimes \Z^r)/\mathfrak{S}_r,$$ where the symmetric group $\mathfrak{S}_r$ acts on $\Z^r$ by permutation.\\ When $X^\si\neq\emptyset$, one can further identify $\Pic^{\,0}_X$ with $X$ over $\R$ and obtain the isomorphism $$\cM_X(r,0) \simeq_{\R} (X\otimes\Z^r) / \mathfrak{S}_r.$$
\end{theorem}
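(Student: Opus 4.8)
The ingredients are already laid out in the discussion preceding the statement; the plan is to check that each link in the chain of identifications there is defined over $\R$ and then compose them. First I would invoke the theorem of Friedman--Morgan--Witten \cite{FMW} and Laszlo \cite{Laszlo}, which gives, over $\C$, an isomorphism $\cM_X(r,0)\simeq\Hom(\pi_1(X,x_0);\U(r))/\U(r)$: by Proposition \ref{poly-stable_bundles} with $h=r$, the poly-stable representative of a class in $\cM_X(r,0)$ is a direct sum $\cL_1\oplus\cdots\oplus\cL_r$ of degree-$0$ line bundles, and it corresponds to $\rho_1\oplus\cdots\oplus\rho_r$, where $\rho_i$ is the holonomy character of the unique flat unitary connection on $\cL_i$. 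I would then check that this isomorphism is real, i.e.\ that it intertwines $\cE\lmt\os{\cE}$ with $\rho\lmt\si\rho\si$ --- the $\si$-action on $\pi_1(X,x_0)$ being well-defined because that group is Abelian, even if $x_0\notin\Xsi$. Since everything in sight is a direct sum of rank-one pieces, this reduces to the rank-one assertion that the holonomy of $\os{\cL}$ is $\si$-conjugate to that of $\cL$, which holds because $\si$ carries the flat unitary connection of $\cL$ to one on $\os{\cL}$.

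Next I would carry out the reduction to the maximal torus: a unitary representation of $\pi_1(X,x_0)\simeq\Z^2$ is a pair of commuting unitary matrices, hence conjugate into $\T_r$, which yields the bijection \eqref{torus_reduction}; this bijection is $\si$-equivariant by the relation $\si(\mathrm{Ad}_u\,\rho)\si=\mathrm{Ad}_{\si(u)}(\si\rho\si^{-1})$ together with $\si(\T_r)=\T_r$. I would then plug in the identification $\T_r\simeq\U(1)\otimes_\Z\Z^r$ of real Abelian Lie groups --- with $\si$ acting on $\Z^r$ by $(n_1,\cdots,n_r)\lmt(-n_1,\cdots,-n_r)$ and $\W_r\simeq\mathfrak{S}_r$ acting on $\Z^r$ by permutation of the standard basis --- together with the classical real isomorphism $\Hom(\pi_1(X,x_0);\U(1))\simeq_\R\Pic^{\,0}_X$ (cf.\ \cite{GH} and Section \ref{line_bundles}). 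Composing everything gives $\cM_X(r,0)\simeq_\R(\Pic^{\,0}_X\otimes_\Z\Z^r)/\mathfrak{S}_r$, where $\Pic^{\,0}_X\otimes_\Z\Z^r\simeq(\Pic^{\,0}_X)^r$ tautologically and $\mathfrak{S}_r$ permutes the factors; unwinding the identifications shows the resulting map is the one induced by $(\cL_1,\cdots,\cL_r)\lmt\cL_1\oplus\cdots\oplus\cL_r$. For the final sentence of the theorem, when $\Xsi\neq\emptyset$ I would simply substitute the real isomorphism $\Pic^{\,0}_X\simeq_\R X$ of Theorem \ref{line_bdle_case}(1).

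The only genuine obstacle I anticipate is the first step --- checking that the Friedman--Morgan--Witten--Laszlo isomorphism respects the real structures. If one prefers to sidestep it, note that for $d=0$ one has $h=r\wedge d=r$ and $d/h=0$, so the proof of Theorem \ref{moduli_space_over_R} already produces a \emph{real} isomorphism $\cM_X(r,0)\simeq_\R\Sym^r(\Pic^{\,0}_X)$ via $\cE\simeq\cE_1\oplus\cdots\oplus\cE_r\lmt[\det\cE_i]_i=[\cE_i]_i$; since $\Sym^r(\Pic^{\,0}_X)=(\Pic^{\,0}_X)^r/\mathfrak{S}_r=(\Pic^{\,0}_X\otimes_\Z\Z^r)/\mathfrak{S}_r$, this already establishes the stated isomorphism over $\R$, and all that then remains is to recognize the same map, on poly-stable representatives, as $(\rho_1,\cdots,\rho_r)\lmt\rho_1\oplus\cdots\oplus\rho_r$ on the representation side.
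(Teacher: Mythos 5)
Your main argument is exactly the paper's proof: the theorem is established there by the chain of equivariant identifications in the paragraph preceding its statement (torus reduction \eqref{torus_reduction}, the Friedman--Morgan--Witten--Laszlo isomorphism intertwining $\cE\lmt\os{\cE}$ with $\rho\lmt\si\rho\si$, and $\T_r\simeq\U(1)\otimes_\Z\Z^r$ with $\Hom(\pi_1(X,x_0);\U(1))\simeq_\R\Pic^{\,0}_X$), which is precisely what you spell out. Your ``sidestep'' via the $d=0$, $h=r$ case of Theorem \ref{moduli_space_over_R}, where the determinant map on rank-one summands is the identity, is also sound and is essentially the alternate proof the paper alludes to.
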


The results of Section \ref{indecomp_bdles} will actually give an alternate proof of Theorem \ref{moduli_space_d_equal_0}, by using the theory of indecomposable vector bundles over elliptic curves. We point out that algebraic varieties of the form $(X\otimes\pi_1(\T))/\W_{\T}$ for $X$ a complex elliptic curve have been studied for instance by Looijenga in \cite{Looijenga}, who identified them with certain weighted projective spaces determined by the root system of $\T$, when the ambient group $G\supset \T$ is semi-simple. Theorem \ref{moduli_space_d_equal_0} shows that, over $\R$, it may sometimes be necessary to replace $X$ by $\Pic^0_X$.

To conclude on the case where $d=0$, we recall that, on $\cM_X(r,0)$, there exists another real structure, obtained from the real structure $\cE\lmt\os{\cE}$ by composing it with the holomorphic involution $\cE\lmt \cE^*$, which in general sends a vector bundle of
degree $d$ to a vector bundle of degree $-d$, so preserves only the moduli spaces $\cM_X(r,0)$. Denote then by
$$
\eta_r\, :
\begin{array}{ccc} {\mathcal M}_X(r,0) & \longrightarrow & {\mathcal M}_X(r,0)\\
\cE & \longmapsto & \overline{\sigma^*E}^{\,*}
\end{array}
$$ this new real structure on the moduli space ${\mathcal M}_X(r,0)$. In particular, we have
$$
\eta_1\, :
\begin{array}{ccc} \Pic^{\,0}_X & \longrightarrow & \Pic^{\,0}_X\\
\cL & \longmapsto & \overline{\sigma^*\cL}^{\,*}
\end{array}
$$
and we note that $\eta_1$ has real points because it fixes the trivial line bundle. The real elliptic curve $(\Pic^{\,0}_X,\eta_1)$ is, in general, not isomorphic to $(X,\sigma)$, even when $\sigma$ has fixed points. We can nonetheless characterize the new real structure of the moduli spaces $\cM_X(r,0)$ in the following way.

\begin{proposition}\label{prop1}
The real variety $({\mathcal M}_X(r,0), \eta_r)$ is isomorphic to the $r$-fold
symmetric product of the real elliptic curve $(\Pic^{\,0}_X,\eta_1)$.
\end{proposition}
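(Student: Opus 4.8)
The plan is to reduce the statement to Theorem~\ref{moduli_space_d_equal_0} by tracking how the involution $\eta_r$ interacts with the decomposition $\cE\simeq\cL_1\oplus\cdots\oplus\cL_r$ established there. First I would recall that a poly-stable bundle of rank $r$ and degree $0$ over $X$ is, by Proposition~\ref{poly-stable_bundles} (with $h=r$, $r'=1$, $d'=0$), a direct sum of degree-$0$ line bundles, so the map $(\cL_1,\cdots,\cL_r)\lmt\cL_1\oplus\cdots\oplus\cL_r$ induces an isomorphism $(\Pic^{\,0}_X)^r/\mathfrak{S}_r\overset{\simeq}{\lra}\cM_X(r,0)$ of complex varieties, which is the content of the holomorphic part of Theorem~\ref{moduli_space_d_equal_0}. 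The next step is to compute $\eta_r$ in these coordinates: since $\eta_r(\cE)=\os{\cE}^{\,*}$ and both $\cE\lmt\os{\cE}$ and $\cE\lmt\cE^*$ distribute over direct sums, we get $\eta_r(\cL_1\oplus\cdots\oplus\cL_r)\simeq\os{\cL_1}^{\,*}\oplus\cdots\oplus\os{\cL_r}^{\,*}=\eta_1(\cL_1)\oplus\cdots\oplus\eta_1(\cL_r)$. Hence, under the isomorphism above, $\eta_r$ corresponds precisely to the involution on $(\Pic^{\,0}_X)^r/\mathfrak{S}_r$ induced by applying $\eta_1$ coordinatewise, which is by definition the real structure of the $r$-fold symmetric product of the real elliptic curve $(\Pic^{\,0}_X,\eta_1)$.

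The one point that requires a little care — and which I expect to be the main (mild) obstacle — is checking that the identification $(\Pic^{\,0}_X)^r/\mathfrak{S}_r\simeq\cM_X(r,0)$ is actually an isomorphism of \emph{real} algebraic varieties once $\cM_X(r,0)$ is equipped with $\eta_r$ and the left side with the symmetric-product structure coming from $\eta_1$: this is exactly the equivariance computation of the previous paragraph, but one should confirm that the two involutions are defined on the same variety (not merely on isomorphic ones) and that the $\mathfrak{S}_r$-action commutes with $\eta_1^{\times r}$, which is immediate since permuting the factors commutes with applying $\eta_1$ in each factor. I would also note that $\eta_1$ is indeed an anti-holomorphic involution of $\Pic^{\,0}_X$: it is the composition of the anti-holomorphic involution $\cL\lmt\os{\cL}$ with the holomorphic involution $\cL\lmt\cL^*$, and on the degree-$0$ Picard variety $\cL\lmt\cL^*$ preserves the degree, so $\eta_1$ is a well-defined real structure (with the trivial bundle as a real point, as already observed in the text).

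Putting these together, the chain of real isomorphisms reads
$$(\cM_X(r,0),\eta_r)\;\simeq_\R\;\big((\Pic^{\,0}_X)^r/\mathfrak{S}_r,\ \eta_1^{\times r}\big)\;=\;\Sym^r(\Pic^{\,0}_X,\eta_1),$$
which is the assertion of the proposition. Essentially no genuinely hard step is involved: the content of the result is entirely carried by Theorem~\ref{moduli_space_d_equal_0} together with the elementary observation that $\cE\lmt\os{\cE}^{\,*}$ respects direct sums, so the proof is a short equivariance argument rather than a new geometric input.
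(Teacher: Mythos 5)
Your argument is correct, and it reaches the conclusion by a slightly different route than the paper. The paper's proof of Proposition \ref{prop1} consists of the single remark that one repeats the proof of Theorem \ref{moduli_space_d_equal_0} "changing only the real structures under consideration", i.e.\ one re-runs the representation-variety chain $\cM_X(r,0)\simeq\Hom(\pi_1(X,x_0);\T_r)/\W_r\simeq(\Pic^{\,0}_X\otimes\Z^r)/\mathfrak{S}_r$ and checks equivariance of each identification for the involution corresponding to $\cE\lmt\os{\cE}^{\,*}$ instead of $\cE\lmt\os{\cE}$. You instead take the underlying complex isomorphism $\Sym^r(\Pic^{\,0}_X)\simeq\cM_X(r,0)$ as already established (it is Theorem \ref{moduli_space_general_case} with $d=0$, where $\det\cE_i=\cE_i$ because the stable summands are degree-$0$ line bundles by Proposition \ref{poly-stable_bundles}) and verify directly that $\eta_r$ distributes over the direct-sum decomposition as $\eta_1$ in each factor. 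This is arguably cleaner: it never requires working out what $\eta_r$ becomes on the unitary representation variety, and it is consistent with the paper's own observation that the results on indecomposable bundles give an alternate, purely algebro-geometric proof of Theorem \ref{moduli_space_d_equal_0}. The one point worth making explicit, which you handle only implicitly, is that the computation is performed on the unique poly-stable representative of each $S$-equivalence class and that $\eta_r$ preserves poly-stability (being the composition of two operations that take stable summands to stable summands of the same slope), so evaluating $\eta_r$ on $\cL_1\oplus\cdots\oplus\cL_r$ does determine it on the moduli space.
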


\begin{proof}
The proposition is proved in the same way as Theorem
\ref{moduli_space_d_equal_0}, changing only the real structures under consideration.
\end{proof}

\subsection{Topology of the set of real points in the coprime case}\label{topology}

In rank $1$, the topology of the set of real points of $\Pic^{\,d}_X$ is well understood and so is the modular interpretation of its elements.

\begin{theorem}[\cite{GH}, case $g=1$]\label{GH_case_g_equal_1}
Let $(X,\si)$ be a compact real Riemann surface of genus $1$ and let $d\in\Z$.
\begin{enumerate}
\item If $X^\si\neq\emptyset$, then $(\Pic^{\,d}_X)^\si\simeq X^\si$ has $1$ or $2$ connected components. Elements of $(\Pic^{\,d}_X)^\si$ correspond to real isomorphism classes of real holomorphic line bundles over $(X,\si)$ and two such real line bundles $(\cL_1,\tau_1)$ and $(\cL_2,\tau_2)$ lie in the same connected component of $(\Pic^{\,d}_X)^\si$ if and only if $w_1(\cL_1^{\tau_1})= w_1(\cL_2^{\tau_2})$.
\item If $X^\si=\emptyset$ and $d=2e+1$, then $(\Pic^{\,d}_X)^\si\simeq X^\si$ is empty.
\item If $X^\si=\emptyset$ and $d=2e$, then $(\Pic^{\,d}_X)^\si\simeq (\Pic^{\,0}_X)^\si$ has $2$ connected components, corresponding to isomorphism classes of either real or quaternionic line bundles of degree $d$, depending on the connected component of $(\Pic^{\,d}_X)^\si$ in which they lie. 
\end{enumerate}
Moreover, in cases $\textstyle{(1)}$ and $\textstyle{(3)}$, any given connected component of $(\Pic^{\,d}_X)^\si$ is diffeomorphic to $S^1$.
\end{theorem}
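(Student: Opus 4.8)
The plan is to reduce everything to Theorem~\ref{line_bdle_case}, to the classical description of the real locus of a genus-$1$ real curve and of its Jacobian (due to Gross--Harris \cite{GH}), and to the correspondence between self-conjugate line bundles and real or quaternionic structures provided by Proposition~\ref{self_conj_stable_bundles}, Theorem~\ref{top_classif} and \cite{BHH}. First I would pin down the homeomorphism type. Theorem~\ref{line_bdle_case} gives a real isomorphism $(\Pic^{\,d}_X)^\si\simeq\Xsi$ in case $(1)$ and (when $\Xsi=\emptyset$) in case $(2)$ --- where it is thus empty --- and $(\Pic^{\,d}_X)^\si\simeq(\Pic^{\,0}_X)^\si$ in case $(3)$. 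In case $(1)$, $\Xsi$ is the real locus of a smooth real curve of genus $1$, so it is a disjoint union of circles and, by Harnack's bound, has at most $g+1=2$ of them; being nonempty, it has $1$ or $2$. In case $(3)$, $(\Pic^{\,0}_X)^\si$ is a closed subgroup of the compact torus $\Pic^{\,0}_X$ containing the class of $\cO_X$, and its Lie algebra is the $(+1)$-eigenspace of the anti-holomorphic involution on $H^1(X,\cO_X)\simeq\C$, hence $1$-dimensional; so $(\Pic^{\,0}_X)^\si\simeq S^1\times F$ with $F$ finite, and applying Harnack to the real elliptic curve $(\Pic^{\,0}_X,\si)$ --- which has the real point $[\cO_X]$ --- gives $|F|\le 2$. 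That $|F|=2$ will follow from the analysis of components below (it is also the genus-$1$ case of the Gross--Harris count \cite{GH}). In every case the components are then circles.

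For the modular interpretation in case $(1)$, a real point of $\Pic^{\,d}_X$ is the class of a line bundle $\cL$ with $\os{\cL}\simeq\cL$, which by Proposition~\ref{self_conj_stable_bundles} admits a real or a quaternionic structure, unique up to isomorphism; since $\Xsi\neq\emptyset$, Theorem~\ref{top_classif}$(1)$ rules out quaternionic line bundles, so $\cL$ carries a real structure $\tau$, and $w_1(\cL^\tau)\in H^1(\Xsi;\Z/2\Z)=\bigoplus_i H^1(C_i;\Z/2\Z)$ is well defined, where $C_1,\dots,C_n$ ($n\le 2$) are the components of $\Xsi$. Fixing $x_0\in\Xsi$, the Abel--Jacobi isomorphism composed with $-\otimes\cL(x_0)^{\otimes(d-1)}$ is a real isomorphism $X\simeq_\R\Pic^{\,d}_X$, so each real point of $\Pic^{\,d}_X$ is $[\cL(x)\otimes\cL(x_0)^{\otimes(d-1)}]$ for a unique $x\in\Xsi$, with its canonical real structure. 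The key point is the computation of $w_1$: for $x\in C_i$, the one-dimensional space $H^0(X,\cL(x))$ carries the conjugate-linear involution induced by $\tau$, hence contains a nonzero real section; this section vanishes only at $x$, to order $1$, so its restriction to $\Xsi$ is a section of the topological real line bundle $\cL(x)^\tau\to\Xsi$ with a single simple zero on $C_i$ and none on the other components, which forces $w_1(\cL(x)^\tau)$ to be the generator of $H^1(C_i;\Z/2\Z)$, independently of $x\in C_i$. By additivity of $w_1$ under tensor products, the class attached to $x\in C_i$ has $w_1$ equal to the generator of $C_i$ plus $(d-1)$ times the generator of $C_1$; this is constant on each component and takes distinct values on distinct components. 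Since the components of $(\Pic^{\,d}_X)^\si\simeq\Xsi$ are exactly the $C_i$ and each is diffeomorphic to $S^1$, two real points lie in the same component if and only if their classes $w_1$ coincide.

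When $\Xsi=\emptyset$: if $d$ is odd, Theorem~\ref{top_classif}$(2)$ with $g=1$ forbids real and quaternionic line bundles of odd degree, so Proposition~\ref{self_conj_stable_bundles} leaves no self-conjugate line bundle of degree $d$ and $(\Pic^{\,d}_X)^\si=\emptyset$, which is case $(2)$. If $d$ is even, write $(\Pic^{\,d}_X)^\si=R_d\sqcup Q_d$ according to whether a self-conjugate bundle carries a real or a quaternionic structure; these sets are disjoint by Proposition~\ref{self_conj_stable_bundles}, and each is open and closed because the sign of $\tau^2$ is locally constant in continuous families. Both are nonempty: $\cL(x_0+\si(x_0))^{\otimes(d/2)}\in R_d$, while $Q_d\neq\emptyset$ because, by \cite{BHH} (see Theorem~\ref{top_classif}$(2)$) together with a compatibility of the holomorphic and smooth pictures, there is a holomorphic quaternionic line bundle of degree $0$, and tensoring it with an element of $R_d$ lands in $Q_d$. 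A space with at most $2$ components (as shown above) admitting a nonempty open--closed partition into two pieces must have exactly $2$ components, namely $R_d$ and $Q_d$; in particular $|F|=2$. Transporting through $-\otimes\cL(x_0+\si(x_0))^{\otimes(d/2)}$ identifies $(\Pic^{\,d}_X)^\si$ with $(\Pic^{\,0}_X)^\si=R_0\sqcup Q_0$, each circle-component being of either the real or the quaternionic type, which is case $(3)$.

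The routine part is the structural reduction via Theorem~\ref{line_bdle_case}; the two delicate points are that the map $[\cL]\mapsto w_1(\cL^\tau)$ is well defined and locally constant --- handled above by transferring the question to $\Xsi$ along the Abel--Jacobi isomorphism and reading $w_1$ off a real holomorphic section, rather than by constructing a real Poincar\'e bundle --- and the existence of a \emph{holomorphic} quaternionic line bundle of degree $0$ when $\Xsi=\emptyset$, which is needed to identify the non-neutral component of $(\Pic^{\,0}_X)^\si$ and is the step where one must appeal to \cite{GH} (or upgrade the topological classification of \cite{BHH} to the holomorphic category). I expect this last point to be the main obstacle.
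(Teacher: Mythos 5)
The paper does not actually prove this statement: it is quoted verbatim from Gross--Harris \cite{GH} (specialized to $g=1$), and the surrounding text only explains how it will be \emph{used} (together with Theorem~\ref{moduli_space_over_R} and Proposition~\ref{self_conj_stable_bundles}) to deduce Theorem~\ref{real_pts_coprime_case}. So your proposal is not comparable to an argument in the paper; it is a reconstruction of the Gross--Harris result from the ingredients the paper does provide. As such it is essentially sound, and the two genuinely good points are exactly the ones you isolate as ``delicate'': (i) reading off $w_1(\cL(x)^\tau)$ from the real holomorphic section of $H^0(X,\cL(x))$, whose single simple zero at $x\in C_i$ forces $w_1$ to be the generator dual to $C_i$, combined with additivity of $w_1$ under tensor product --- this correctly shows $w_1$ is constant on each component of $(\Pic^{\,d}_X)^\si\simeq\Xsi$ and separates the components; and (ii) the open--closed partition $R_d\sqcup Q_d$ pinned against the Harnack bound of $2$ for the elliptic curve $(\Pic^{\,0}_X,\eta)$ with real point $[\cO_X]$.

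Two spots remain thin, and you should be explicit that they are imported rather than proved. First, the assertion that the sign of $\tau^2$ is ``locally constant in continuous families'' needs a real argument (e.g.\ that real, resp.\ quaternionic, degree-$0$ line bundles form a coset of the image of the norm/averaging map on $(\Pic^{\,0}_X)^\si$, or the gauge-theoretic local-constancy statement from \cite{BHH,Sch_JSG}); as stated it is an appeal to intuition. Second, the existence of a \emph{holomorphic} quaternionic line bundle of degree $0$ when $\Xsi=\emptyset$ does not follow from Theorem~\ref{top_classif} alone, since that theorem classifies smooth bundles; one must add that every smooth real or quaternionic bundle admits a compatible holomorphic structure (choose a $\tau$-invariant Dolbeault operator --- the involution on the affine space of Dolbeault operators has fixed points), which is the standard upgrade carried out in \cite{BHH}. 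With those two references made precise, your argument is a complete and correct proof of the genus-$1$ case, and arguably a useful addition given that the paper leaves it entirely to \cite{GH}.
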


For real Riemann surfaces of genus $g\geq 2$, the topology of $(\Pic^{\,d}_X)^\si$, in particular the number of connected components, is a bit more involved but also covered in \cite{GH}, the point being that these components are indexed by the possible topological types of real and quaternionic line bundles over $(X,\si)$. For vector bundles of rank $r\geq 2$ on real Riemann surfaces of genus $g\geq 2$, a generalization of the results of Gross and Harris was obtained in \cite{Sch_JSG}: we recall here the result for coprime rank and degree (in general, a similar but more complicated result holds provided one restricts one's attention to the stable locus in $\ModC$). The coprime case is the case that we will actually generalize to genus $1$ curves (where stable bundles can only exist in coprime rank and degree).

\begin{theorem}[\cite{Sch_JSG}]\label{top_real_pts_hyp_case}
Let $(X,\si)$ be a compact real Riemann surface of genus $g\geq 2$ and assume that $r\wedge d=1$. The number of connected component of $\ModC^\si$ is equal to:
\begin{enumerate}
\item $2^{n-1}$ if $X^\si$ has $n>0$ connected components. In this case, elements of $\ModC^\si$ correspond to real isomorphism classes of real holomorphic vector bundles of rank $r$ and degree $d$ and two such bundles $(\cE_1,\tau_1)$ and $(\cE_2,\tau_2)$ lie in the same connected component if and only if $w_1(\cE^{\tau_1})=w_1(\cE_2^{\tau_2})$.
\item $0$ if $X^\si=\emptyset$, $d$ is odd and $r(g-1)$ is even.
\item $1$ if $X^\si=\emptyset$, $d$ is odd and $r(g-1)$ is odd, in which case the elements of $\ModC^\si$ correspond to quaternionic isomorphism classes of quaternionic vector bundles of rank $r$ and degree $d$.
\item $1$ if $X^\si=\emptyset$, $d$ is even and $r(g-1)$ is odd, in which case the elements of $\ModC^\si$ correspond to real isomorphism classes of real vector bundles of rank $r$ and degree $d$.
\item $2$ if $X^\si=\emptyset$, $d$ is even and $r(g-1)$ is even, in which case there is one component consisting of real isomorphism classes of real vector bundles of rank $r$ and degree $d$ while the other consists of quaternionic isomorphism classes of quaternionic vector bundles of rank $r$ and degree $d$.
\end{enumerate}
\end{theorem}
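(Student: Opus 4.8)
The plan is to reduce the count of connected components to the topological classification of real and quaternionic bundles recalled in Theorem~\ref{top_classif}; the one genuinely hard input will be the connectedness of each resulting stratum, which I would import from the gauge-theoretic construction of moduli of real and quaternionic bundles in \cite{BHH}.

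First I would split $\ModC^\si$ into a ``real part'' and a ``quaternionic part''. Since $r\wedge d=1$, every semi-stable bundle of rank $r$ and degree $d$ is stable, so a point of $\ModC^\si$ is the isomorphism class of a stable bundle $\cE$ with $\os{\cE}\simeq\cE$; by Proposition~\ref{self_conj_stable_bundles}, such an $\cE$ carries an anti-holomorphic $\tau$ with $\tau^2=\Id_\cE$ (a real structure) or with $\tau^2=-\Id_\cE$ (a quaternionic structure), unique up to isomorphism in each case, and a Schur's-lemma argument rules out both at once: if $\tau_1^2=\Id$ and $\tau_2^2=-\Id$, then $\tau_2\tau_1$ lies in $\Aut(\cE)=\C^\ast$, say $\tau_2\tau_1=\lambda\,\Id$, whence $\tau_2=\lambda\tau_1$ and $|\lambda|^2=-1$, which is absurd. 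Hence $\ModC^\si=\cM^{\R}\sqcup\cM^{\mathbb H}$, the sets of real, resp. quaternionic, isomorphism classes of geometrically stable real, resp. quaternionic, bundles of rank $r$ and degree $d$ over $(X,\si)$; and since a compatible $\tau$ can be chosen to vary continuously with $\cE$, the sign $\tau^2=\pm\Id$ is locally constant, so both parts are open and closed in $\ModC^\si$.

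Next I would stratify each part by topological type and count. The class $w_1(\cE^\tau)\in H^1(\Xsi;\Z/2\Z)$ when $\Xsi\neq\emptyset$, resp. the pair $(\rk,\deg)$ when $\Xsi=\emptyset$, is a locally constant invariant, so $\cM^{\R}$ and $\cM^{\mathbb H}$ are disjoint unions of open-and-closed strata indexed by the admissible topological types of Theorem~\ref{top_classif}. If $\Xsi$ has $n>0$ components, the admissible real types are the $2^{n-1}$ tuples $(s_1,\dots,s_n)$ with $s_1+\dots+s_n\equiv d\ (\mathrm{mod}\ 2)$, and there are no quaternionic bundles, as these would require $r$ and $d$ both even against $r\wedge d=1$: total $2^{n-1}$. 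If $\Xsi=\emptyset$, a real bundle of rank $r$, degree $d$ exists iff $d$ is even (one type) and a quaternionic one iff $d+r(g-1)\equiv 0\ (\mathrm{mod}\ 2)$ (one type); running through the four parities of $\big(d,\ r(g-1)\big)$ gives $0$, $1$, $1$, $2$ strata, the surviving stratum being of the real or of the quaternionic kind exactly as in the statement.

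It then remains to prove that each admissible stratum is non-empty and connected, which is the crux. Non-emptiness I would get by exhibiting a geometrically stable real, resp. quaternionic, bundle of each admissible topological type --- by explicit constructions such as direct sums of line bundles of suitable degrees and Stiefel--Whitney data, extensions, or elementary modifications, the numerical conditions in Theorem~\ref{top_classif} being exactly what make these possible, together with the existence of stable bundles of every rank and degree when $g\geq 2$. Connectedness I would obtain from the gauge-theoretic construction of real and quaternionic moduli in \cite{BHH}, using the identification of $\ModC^\si$ with those spaces provided by \cite{Sch_JSG}: after fixing a $\si$-compatible Hermitian metric, the stratum is identified, via a real, resp. quaternionic, version of the Narasimhan--Seshadri correspondence, with the quotient of a connected space of $\si$-compatible projectively flat unitary connections on a fixed smooth bundle of the given topological type by the group of $\si$-compatible unitary gauge transformations; the remaining point, that this quotient is still connected --- equivalently, the control of $\pi_0$ of the real, resp. quaternionic, gauge group and of the way it permutes components --- is the delicate step, and the one I expect to be the main obstacle. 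Granting it, Steps~1--3 combine to show that the connected components of $\ModC^\si$ are precisely the strata counted above, with the stated modular interpretation of their points and with $w_1$ as the distinguishing invariant in case~(1), which is the assertion of the theorem.
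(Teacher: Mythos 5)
The paper does not prove this statement: Theorem~\ref{top_real_pts_hyp_case} is quoted verbatim from \cite{Sch_JSG} as background (the surrounding text says ``we recall here the result for coprime rank and degree''), so there is no internal proof to compare against. Judged on its own, your outline is a faithful reconstruction of the strategy of \cite{Sch_JSG} and \cite{BHH,LS}: the dichotomy real/quaternionic via Proposition~\ref{self_conj_stable_bundles} and the Schur-lemma computation $\tau_2^2=|\lambda|^2\,\Id$ is exactly right, and the parity bookkeeping against Theorem~\ref{top_classif} correctly reproduces the counts $2^{n-1},0,1,1,2$. You also correctly isolate non-emptiness and connectedness of each stratum as the genuine content.

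Two remarks on where the sketch is thinnest. First, the claim that ``a compatible $\tau$ can be chosen to vary continuously with $\cE$'' is avoidable and slightly delicate (it implicitly uses a local universal family); the cleaner route, and the one taken in \cite{Sch_JSG}, is to observe that $\ModC^\si$ is covered by the images of the compact moduli spaces of real, resp.\ quaternionic, bundles of each admissible topological type, that these images are pairwise disjoint by Proposition~\ref{self_conj_stable_bundles} together with the topological invariants, and that finitely many disjoint closed sets covering a space are all open. Second, your diagnosis of the hard step is slightly misplaced: in the gauge-theoretic model the space of $\si$-compatible holomorphic structures on a fixed smooth real or quaternionic bundle is affine, hence connected, and a quotient of a connected space is automatically connected whatever $\pi_0$ of the gauge group is; the actual delicate point, established in \cite{LS}, is that the \emph{semistable} locus inside that affine space is connected, i.e.\ that the real Harder--Narasimhan strata one removes have codimension at least two. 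With that input your argument closes, so the proposal is correct as an outline but, like the paper, ultimately defers the crux to \cite{Sch_JSG} and \cite{LS}.
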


Now, using Theorem \ref{moduli_space_over_R}, we can extend Theorem \ref{top_real_pts_hyp_case} to the case $g=1$. Indeed, to prove Theorem \ref{real_pts_coprime_case}, we only need to combine Theorem \ref{GH_case_g_equal_1} and the coprime case of Theorem \ref{moduli_space_over_R} (i.e.,\ $h=1$), with the following result, for a proof of which we refer to either \cite{BHH} or \cite{Sch_JSG}.

\begin{proposition}\label{self_conj_stable_bundles}
Let $(X,\si)$ be a compact connected real Riemann surface and let $\cE$ be a stable holomorphic vector bundle over $X$ satisfying $\os{\cE}\simeq\cE$. Then $\cE$ is either real or quaternionic and cannot be both. Moreover, two different real or quaternionic structures on $\cE$ are conjugate by a holomorphic automorphism of $\cE$. 
\end{proposition}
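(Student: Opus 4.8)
The plan is to exploit the rigidity of stable bundles, namely that $\Aut(\cE) = \C^*$ when $\cE$ is stable. Suppose $\cE$ is stable with $\os{\cE} \simeq \cE$ and fix a holomorphic isomorphism $\phi : \os{\cE} \lra \cE$; concretely, $\phi$ is an anti-holomorphic map $\cE \lra \cE$ covering $\si$ which is fibrewise $\C$-anti-linear. First I would form $\phi^2$: this is a holomorphic automorphism of $\cE$ covering $\si^2 = \Id_X$, hence by stability $\phi^2 = \lambda\,\Id_\cE$ for some $\lambda \in \C^*$. Composing $\phi^2$ once more with $\phi$ and using anti-linearity, one gets $\phi \circ \phi^2 = \phi^2 \circ \phi$, i.e. $\lambda\phi = \ov{\lambda}\phi$, so $\lambda \in \R^*$. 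Replacing $\phi$ by $t\phi$ for $t \in \R_{>0}$ rescales $\lambda$ by $t^2 > 0$, so after rescaling we may assume $\lambda = 1$ or $\lambda = -1$. In the first case $\tau := \phi$ satisfies $\tau^2 = \Id$ and is a real structure; in the second case $\tau := \phi$ satisfies $\tau^2 = -\Id$ and is a quaternionic structure. This shows $\cE$ admits one or the other.

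Next I would show the two possibilities are mutually exclusive and that the structure is essentially unique. Given two real-or-quaternionic structures $\tau_1, \tau_2$ on $\cE$, the composite $\psi := \tau_2 \circ \tau_1^{-1} = \pm\,\tau_2 \circ \tau_1$ is a holomorphic automorphism of $\cE$ (both maps are anti-holomorphic and cover $\si$, so the composite is holomorphic and covers $\Id_X$), hence $\psi = \mu\,\Id_\cE$ for some $\mu \in \C^*$. Thus $\tau_2 = \mu\,\tau_1$ (up to sign, which we absorb). Computing $\tau_2^2 = \mu\,\tau_1 \circ \mu\,\tau_1 = \mu\ov{\mu}\,\tau_1^2 = |\mu|^2 \tau_1^2$: if $\tau_1^2 = \epsilon_1 \Id$ and $\tau_2^2 = \epsilon_2 \Id$ with $\epsilon_i \in \{\pm 1\}$, then $\epsilon_2 = |\mu|^2 \epsilon_1$, and since $|\mu|^2 > 0$ we get $\epsilon_1 = \epsilon_2$. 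This rules out one bundle being both real and quaternionic. Finally, writing $\mu = t^2\nu$ with $t = |\mu|^{1/2} > 0$ and $|\nu| = 1$, the automorphism $g := t\,\Id_\cE$ (or a square root of $\nu$ times it) conjugates $\tau_1$ to $\tau_2$: indeed $g \circ \tau_1 \circ g^{-1} = (g/\ov{g})\,\tau_1$, and one can choose $g$ with $g/\ov{g} = \mu/|\mu|$ so that $g\tau_1 g^{-1} = \tau_2$. Since $|\mu|^2\epsilon_1 = \epsilon_2 = \epsilon_1$ forces $|\mu| = 1$, actually $\mu = \nu$ lies on the unit circle and a unit-modulus square root does the job.

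The only genuinely delicate point is the reality of $\lambda = \phi^2$, so let me spell out the mechanism I have in mind. Since $\phi$ is anti-linear over $\si$, for any holomorphic automorphism $a = c\,\Id$ of $\cE$ one has $\phi \circ a \circ \phi^{-1} = \ov{c}\,\Id$ (this is the standard formula describing how $\Aut(\cE) = \C^*$ transforms under the anti-holomorphic $\phi$). Apply this with $a = \phi^2 = \lambda\,\Id$: on one hand $\phi \circ \phi^2 \circ \phi^{-1} = \phi^2 = \lambda\,\Id$; on the other hand the formula gives $\ov{\lambda}\,\Id$. Hence $\lambda = \ov{\lambda}$, i.e. $\lambda \in \R^*$. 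This is exactly where stability (forcing scalar automorphisms) and the genus-independent structure theory enter; everything else is formal manipulation with anti-linear maps. Since the proposition is quoted from \cite{BHH} and \cite{Sch_JSG}, I would keep the write-up at roughly this level of detail and refer to those papers for the routine verifications.
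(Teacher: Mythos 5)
Your argument is correct and follows essentially the same route as the paper (which refers the full proof to \cite{BHH} and \cite{Sch_JSG} but highlights precisely your key step): use simplicity of the stable bundle $\cE$ to reduce everything to scalars, deduce that $\phi^2=\lambda\,\Id$ with $\lambda$ real and of well-defined sign, and conjugate $\tau_1$ to $\tau_2=e^{i\theta}\tau_1$ by the automorphism $e^{i\theta/2}\Id$, exactly as in the paper's remark following the proposition. No gaps; only the parenthetical ``up to sign, which we absorb'' is superfluous, since $\tau_2\circ\tau_1^{-1}=\mu\,\Id$ already gives $\tau_2=\mu\,\tau_1$ on the nose.
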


\noindent Note that it is easy to show that two real (respectively,\ quaternionic) structures on $\cE$ \textit{differ} by a holomorphic automorphism $e^{i\theta}\in S^1\subset \C^*\simeq \Aut(\cE)$ but, in order to prove that these two structures $\tau_1$ and $\tau_2$, say, \textit{are conjugate}, we need to observe that $\tau_2(\,\cdot\,)=e^{i\theta}\tau_1(\,\cdot\,)=e^{i\theta/2}\tau_1(e^{-i\theta/2}\,\cdot\,)$. Then, to finish the proof of Theorem \ref{real_pts_coprime_case}, we proceed as follows. 

\begin{proof}[Proof of Theorem \ref{real_pts_coprime_case}]
Recall that $X$ here has genus $1$. If $X^\si\neq\emptyset$, quaternionic vector bundles must have even rank and degree by Theorem \ref{top_classif}, so, by Proposition \ref{self_conj_stable_bundles}, points of $\ModC^\si$ correspond in this case to real isomorphism classes of geometrically stable real vector bundles of rank $r$ and degree $d$. By Theorem \ref{moduli_space_over_R}, one indeed has $\ModC^\si\simeq (\Pic^{\,d}_X)^\si \simeq X^\si$ in this case. Moreover, since the diffeomorphism $\ModC^\si\simeq (\Pic^{\,d}_X)^\si$ is provided by the determinant map, the connected components of $\ModC^\si$, or equivalently of $(\Pic^{\,d}_X)^\si$ are indeed distinguished by the first Stiefel-Whitney class of the real part of the real bundles that they parametrize, as in Theorem \ref{GH_case_g_equal_1}. If now $X^\si=\emptyset$,  then by Theorem \ref{top_classif}, real and quaternionic vector bundles must have even degree and we can again use Theorem \ref{GH_case_g_equal_1} to conclude: note that since the diffeomorphism $\ModC^\si\simeq
(\Pic^{\,d}_X)^\si$ is provided by the determinant map, when $d$ is even $r$ must be
odd (because $r$ is assumed to be coprime to $d$), so the determinant indeed takes real vector bundles to real line bundles and quaternionic vector bundles to quaternionic line bundles.
\end{proof}

Had we not assumed $r\wedge d=1$, then the situation would have been more complicated to analyze, because the determinant of a quaternionic vector bundle of even rank is a real line bundle and also because, when $h=r\wedge d$ is even, the real space $\Sym^h(X)$ may have real points even if $X$ does not (points of the form $[x_i,\si(x_i)]_{1\leq i\leq \frac{h}{2}}$ for $x_i\in X$).

\subsection{Real vector bundles of fixed determinant}\label{fixed_det_case_section}

Let us now consider spaces of vector bundles of fixed determinant. By Theorem 3 of
\cite{Tu}, one has, for any $\cL\in\Pic^{\,d}_X$, $\cM_X(r,\cL):=\det ^{-1}(\{\cL\})
\simeq_{\C} \mathbb{P}_{\C}^{h-1}$ where $d=\deg(\cL)$ and $h=r\wedge d$. This is proved
in the following way: under the identification $\ModC\simeq_{\C}\Sym^h(X)$, there is
a commutative diagram $$\begin{CD}
\ModC @>{\simeq}>> \Sym^h(X)\\
@VV{\det}V @VV{\mathrm{AJ}}V \\
\Pic^{\,d}_X @>{\simeq}>> \Pic^{\,h}_X
\end{CD}$$ where $$\mathrm{AJ}: \begin{array}{ccc}
\Sym^h(X) & \lra & \Pic^{\,h}_X \\
(x_1,\cdots,x_h) & \lmt & \cL(x_1+\ldots+x_h)
\end{array}$$ is the Abel-Jacobi map (taking a finite family of points $(x_1,\cdots,x_h)$ to the line bundle associated to the divisor $x_1+\ldots+x_h$) and the fiber of the Abel-Jacobi map above a holomorphic line bundle $\cL$ of degree $h$ is the projective space $\mathbb{P}(H^0(X,\cL))$ which, since $\deg(\cL)=h\geq 1$ and $X$ has genus $1$, is isomorphic to $\mathbb{P}_{\C}^{h-1}$. Evidently, the same proof will work over $\R$ whenever we can identify $\Pic^{\,d}_X$ and $\Pic^{\,h}_X$ as real varieties, which happens in particular when $X^\si\neq\emptyset$.

\begin{theorem}\label{fixed_det_case}
Let $(X,x_0,\si)$ be a real elliptic curve satisfying $X^\si\neq\emptyset$ and let $\cL$ be a real line bundle of degree $d$ on $X$. Then, for all $r\geq 1$, $$\cM_X(r,\cL) \simeq_{\R} \mathbb{P}_{\R}^{h-1}$$ where $h=r\wedge d$.
\end{theorem}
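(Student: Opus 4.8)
The plan is to mimic, over $\R$, the argument sketched just before the statement for the complex case, the only new ingredient being the verification that all the maps and identifications involved are defined over $\R$. First I would invoke the hypothesis $X^\si\neq\emptyset$ to choose $x_0\in X^\si$; by Theorem \ref{line_bdle_case}, tensoring by a suitable power of $\cL(x_0)$ gives real isomorphisms $\Pic^{\,d}_X\simeq_\R\Pic^{\,h}_X$ for every $d$, and since $\cL(x_0)$ is itself real (as $\os{\cL(x_0)}\simeq\cL(\si(x_0))=\cL(x_0)$) these isomorphisms are genuinely $\R$-morphisms. Under the identification $\ModC\simeq_\R\Sym^h(X)$ of Theorem \ref{moduli_space_over_R} (which, as shown in the proof of that theorem, is a real map), the determinant map corresponds to the Abel-Jacobi map $\mathrm{AJ}:\Sym^h(X)\lra\Pic^{\,h}_X$, and this correspondence fits into the commutative square displayed in the text; because the Abel-Jacobi map is defined over $\R$ (it satisfies $\cL(\si(x_1)+\cdots+\si(x_h))\simeq\os{\cL(x_1+\cdots+x_h)}$, cf.\ Section \ref{line_bundles}), the whole square is a diagram of real varieties and real morphisms.

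Next I would identify the fibre. A real line bundle $\cL$ of degree $d$ corresponds, under the real isomorphism $\Pic^{\,d}_X\simeq_\R\Pic^{\,h}_X$, to a real line bundle $\cL'$ of degree $h$, and the fibre $\cM_X(r,\cL)$ is then identified with the fibre $\mathrm{AJ}^{-1}(\{\cL'\})=\mathbb{P}(H^0(X,\cL'))$. Since $\deg(\cL')=h\geq1$ and $X$ has genus $1$, one has $\dim_\C H^0(X,\cL')=h$ by Riemann-Roch, so this fibre is a $\C$-projective space of dimension $h-1$. The real structure on $\cM_X(r,\cL)$ restricted from that of $\ModC$ corresponds, via these identifications, to the real structure induced on $\mathbb{P}(H^0(X,\cL'))$ by the real structure on the vector space $H^0(X,\cL')$ coming from the real structure $\tau$ of $(\cL',\tau)$ — concretely, a section $s$ is sent to $\tau\circ s\circ\si$. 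This is a $\C$-antilinear involution of the $h$-dimensional vector space $H^0(X,\cL')$.

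The heart of the matter is then the following elementary linear-algebra fact: if $V$ is a finite-dimensional complex vector space equipped with a $\C$-antilinear involution $c$ with $c^2=\Id$, then $V$ admits a basis of $c$-fixed vectors (namely $V=V^c\oplus iV^c$ with $V^c$ a real form), and consequently the induced real structure on $\mathbb{P}(V)$ has real locus $\mathbb{P}(V^c)\simeq\RP^{\dim V-1}$, i.e.\ $(\mathbb{P}(V),[c])\simeq_\R\mathbb{P}_\R^{\dim V-1}$. Applying this with $V=H^0(X,\cL')$ gives $\cM_X(r,\cL)\simeq_\R\mathbb{P}_\R^{h-1}$, as claimed. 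The one point requiring a word of justification — and the place where the hypothesis $\tau^2=\Id$ rather than $\tau^2=-\Id$ is used — is that the involution on $H^0(X,\cL')$ really squares to $+\Id$; this follows because $(\tau\circ s\circ\si)$ applied twice gives $\tau^2\circ s\circ\si^2=s$, using $\tau^2=\Id_{\cL'}$ and $\si^2=\Id_X$. (Had $\cL'$ been only quaternionic we would get a quaternionic structure on $H^0$ and a different answer, but that situation does not arise here since, by Theorem \ref{top_classif}, a line bundle over a genus-$1$ real curve with $X^\si\neq\emptyset$ that is self-conjugate of odd degree is necessarily real — and in any case we have assumed $(\cL,\tau)$, hence $(\cL',\tau)$, to be real from the start.)

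The main obstacle, such as it is, is purely bookkeeping: one must be careful that the chain of real isomorphisms $\ModC\simeq_\R\Sym^h(X)$, $\det\leftrightarrow\mathrm{AJ}$, and $\Pic^{\,d}_X\simeq_\R\Pic^{\,h}_X$ are all compatible with the respective real structures, so that the fibre of $\det$ over a \emph{real} point $\cL$ is carried to the fibre of $\mathrm{AJ}$ over a \emph{real} point $\cL'$ together with its natural real structure. Once that compatibility is in place — and it is, because every map in sight was already observed to be real in Sections \ref{line_bundles} and \ref{real_structure_on_mod_space} — the result drops out of the linear-algebra fact above. No genuinely hard step is involved; the content is the observation, already made in the paragraph preceding the theorem, that ``the same proof will work over $\R$ whenever we can identify $\Pic^{\,d}_X$ and $\Pic^{\,h}_X$ as real varieties,'' which $X^\si\neq\emptyset$ guarantees.
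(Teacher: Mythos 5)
Your proposal is correct and follows the same route as the paper: choose $x_0\in X^\si$, use Theorem \ref{line_bdle_case} to identify all the Picard varieties over $\R$, and rerun Tu's fibre-of-the-Abel--Jacobi-map argument. The paper's proof is exactly this one-line reduction; your added verification that the $\C$-antilinear involution $s\mapsto\tau\circ s\circ\si$ on $H^0(X,\cL')$ squares to $+\Id$, so that $\mathbb{P}(H^0(X,\cL'))$ is the split real form $\mathbb{P}_{\R}^{h-1}$ rather than a twisted form, is precisely the detail the paper leaves implicit.
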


\begin{proof}
When $X^\si\neq\emptyset$, we can choose $x_0\in X^\si$ and use Theorem \ref{line_bdle_case} to identify all Picard varieties $\Pic^{\,k}_X$ over $\R$, then reproduce Tu's proof recalled above.
\end{proof}

\section{Indecomposable vector bundles}\label{indecomp_bdles}

\subsection{Indecomposable vector bundles over a complex elliptic curve}

As recalled in the introduction, a theorem of Grothendieck of 1957 shows that any holomorphic vector bundle on $\CP^1$ is isomorphic to a direct sum of holomorphic line bundles (\cite{Grot_P1}) and this can be easily recast in modern perspective by using the notions of stability and semi-stability of vector bundles over curves, introduced by Mumford in 1962 and first studied by himself and Seshadri (\cite{Mumford_Proc,Seshadri}): the moduli variety of semi-stable vector bundles of rank $r$ and degree $d$ over $\CP^1$ is a single point if $r$ divides $d$ and is empty otherwise. As for vector bundles over a complex elliptic curve, the study was initiated by Atiyah in a paper published in 1957, thus at a time when the notion of stability was not yet available. Rather, Atiyah's starting point in \cite{Atiyah_elliptic_curves} is the notion of an indecomposable vector bundle: a holomorphic vector bundle $\cE$ over a complex curve $X$ is said to be indecomposable if it is not isomorphic to a direct sum of non-trivial holomorphic bundles. In the present paper, we shall denote by $\cI_X(r,d)$ the set of isomorphism classes of indecomposable vector bundles of rank $r$ and degree $d$. It is immediate from the definition that a holomorphic vector bundle is a direct sum of indecomposable ones. Moreover, one has the following result, which is a consequence of the categorical Krull-Schmidt theorem, also due to Atiyah (in 1956), showing that the decomposition of a bundle into indecomposable ones is essentially unique.

\begin{proposition}[{\cite[Theorem 3]{Atiyah_Krull-Schmidt}}]\label{Atiyah_Krull-Schmidt}
Let $X$ be a compact connected complex analytic manifold. Any holomorphic vector bundle $\cE$ over $X$ is isomorphic to a direct sum $\cE_1\oplus\cdots\oplus\cE_k$ of indecomposable vector bundles. If one also has $\cE\simeq\cE'_1\oplus\cdots\oplus\cE'_{l}$, then $l=k$ and there exists a permutation $\si$ of the indices such that $\cE'_{\si(i)}\simeq\cE_i$.
\end{proposition}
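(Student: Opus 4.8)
The plan is to deduce the statement from three ingredients: the existence of \emph{some} decomposition of $\cE$ into indecomposables, which is elementary; the fact that an indecomposable holomorphic bundle on a compact manifold has a \emph{local} endomorphism ring, which is where the hypotheses genuinely enter; and a purely formal ``exchange'' argument that upgrades these two facts to the uniqueness statement. Concretely, the target is the abstract Krull--Schmidt (Azumaya) theorem applied to the $\C$-linear additive category of holomorphic vector bundles on $X$, exactly as set up in \cite{Atiyah_Krull-Schmidt}.

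First I would establish the existence of a decomposition by induction on $\rk(\cE)$. Either $\cE$ is indecomposable, in which case there is nothing to prove, or $\cE\simeq\cE'\oplus\cE''$ with $\cE'$ and $\cE''$ of strictly smaller positive rank, and we apply the induction hypothesis to each summand. Since the rank is a non-negative integer, the process terminates, yielding $\cE\simeq\cE_1\oplus\cdots\oplus\cE_k$ with each $\cE_i$ indecomposable.

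The key step is to show that if $\cE$ is indecomposable then $\mathrm{End}(\cE)=H^0(X,\mathcal{H}om(\cE,\cE))$ is a local ring. Here compactness of $X$ is used: by the Cartan--Serre finiteness theorem for coherent sheaves on a compact complex manifold (in the projective case one could instead invoke GAGA together with the finiteness of coherent cohomology), $\mathrm{End}(\cE)$ is a finite-dimensional $\C$-algebra. A holomorphic direct sum decomposition of $\cE$ is the same datum as an idempotent $p=p^2\in\mathrm{End}(\cE)$, namely the projection onto a summand, so indecomposability of $\cE$ means that $\mathrm{End}(\cE)$ has no idempotents other than $0$ and $\Id$. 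A finite-dimensional $\C$-algebra $A$ with this property is local: its Jacobson radical $J$ is nilpotent, $A/J$ is semisimple with no non-trivial idempotents, hence a division algebra over $\C$, hence $\C$ itself since $\C$ is algebraically closed; therefore $J$ is precisely the set of non-units and $A$ is local. In particular every endomorphism of an indecomposable bundle is either an automorphism or nilpotent --- a Fitting-type dichotomy.

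Finally I would run the Azumaya argument. Suppose $\cE_1\oplus\cdots\oplus\cE_k\simeq\cE'_1\oplus\cdots\oplus\cE'_l$ with all summands indecomposable, hence with local endomorphism rings by the previous step. Writing $\iota_i,\pi_i$ and $\iota'_j,\pi'_j$ for the structural inclusions and projections, one gets $\Id_{\cE_1}=\sum_{j}(\pi_1\iota'_j)(\pi'_j\iota_1)$ in the local ring $\mathrm{End}(\cE_1)$, so at least one term $(\pi_1\iota'_j)(\pi'_j\iota_1)$ is a unit; for that index $j$ the composite $\cE_1\to\cE'_j\to\cE_1$ is an automorphism, which exhibits $\cE_1$ as a direct summand of the indecomposable bundle $\cE'_j$, forcing $\cE_1\simeq\cE'_j$. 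The exchange lemma then lets one cancel $\cE_1$ from both sides, and induction on $k$ gives $l=k$ together with a permutation matching the $\cE'_j$ to the $\cE_i$ up to isomorphism. The main obstacle is really concentrated in the single analytic input --- finite-dimensionality of $\mathrm{End}(\cE)$ --- together with the passage from ``no non-trivial idempotents'' to ``local ring'': once these are in place, both the existence and the uniqueness of the decomposition are formal, and one may alternatively just quote \cite[Theorem 3]{Atiyah_Krull-Schmidt}.
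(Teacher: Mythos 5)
Your proof is correct and is essentially the argument of the cited reference \cite{Atiyah_Krull-Schmidt}; the paper itself gives no proof of this proposition, simply quoting Atiyah's Theorem 3, and your three ingredients (finite-dimensionality of $\mathrm{End}(\cE)$ from compactness, absence of non-trivial idempotents implying locality, and the Azumaya exchange) are exactly the content of that reference. The only point left implicit is that an idempotent $p\in\mathrm{End}(\cE)$ really does split $\cE$ holomorphically: the image of $p$ is a holomorphic subbundle because the fibrewise ranks of $p$ and $\Id-p$ are lower semicontinuous and sum to $\rk(\cE)$, hence are locally constant, and $X$ is connected --- after which the reduction to a local endomorphism ring and the exchange argument go through exactly as you describe.
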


Going back to the case of a compact, connected Riemann surface $X$ of genus $1$, Atiyah completely describes all indecomposable vector bundles on $X$. He first shows the existence, for any $h\geq 1$, of a unique (isomorphism class of) indecomposable vector bundle $F_h$ of rank $h$ and degree $0$ such that \begin{equation}\label{dim_space_of_sections_of_F_h}
\dim H^0(X;F_h)=1
\end{equation} (\cite[Theorem 5]{Atiyah_elliptic_curves}). As a matter of fact, this is the only vector bundle of rank $h$ and degree $0$ over $X$ with a non-zero space of sections. Let us call $F_h$ the \textit{Atiyah bundle} of rank $h$ and degree $0$. The construction of $F_h$ is by induction, starting from $F_1=\cO_X$, the trivial line bundle over $X$, and showing the existence and uniqueness of an extension of the form \begin{equation}\label{ses_defining_F_h}
0 \lra \cO_X \lra F_h \lra F_{h-1} \lra 0.
\end{equation} In particular, $\det(F_h)=\cO_X$. Moreover, Since $F_h$ is the unique indecomposable vector  bundle with non-zero space of sections, one has (\cite[Corollary 1]{Atiyah_elliptic_curves}):
 \begin{equation}\label{F_h_self-dual}
F_h^*\simeq F_h.
\end{equation} Note that $F_h$ is an extension of semi-stable bundles so it is semi-stable. The associated poly-stable bundle is the trivial bundle of rank $h$, which is not isomorphic to $F_h$ (in particular, $F_h$ is not itself poly-stable).

Atiyah then shows that any indecomposable vector bundle $\cE$ of rank $h$ and degree $0$ is isomorphic to $F_h\otimes \cL$ for a line bundle $\cL$ of degree $0$ which is unique up to isomorphism (\cite[Theorem 5-(ii)]{Atiyah_elliptic_curves}). Since it follows from the construction of $F_h$ recalled in \eqref{ses_defining_F_h} that $\det(F_h)=\cO_X$, one has $\det\cE=\cL^h$. This sets up a bijection \begin{equation}\label{isom_with_line_bundles}
\begin{array}{ccc}
\Pic^{\,0}_X & \lra & \cI_X(h,0)\\
\cL & \lmt & F_h\otimes\cL
\end{array}.
\end{equation} Note that the map \eqref{isom_with_line_bundles} is just the identity map if $h=1$. Atiyah then uses a marked point $x_0\in X$ to further identify $\Pic^{\,0}_X$ with $X$. In particular, the set $\cI_X(h,0)$ inherits a natural structure of complex analytic manifold of dimension $1$.

The next step in Atiyah's characterization of indecomposable vector bundles is to consider the case of vector bundles of non-vanishing degree. He shows that, associated to the choice of a marked point $x_0\in X$, there is, for all $r$ and $d$, a unique bijection (subject to certain conditions) \begin{equation}\label{Atiyah_map}\alpha^{\,x_0}_{r,d}:\mathcal{I}_X(r\wedge d,0) \lra \mathcal{I}_X(r,d)\end{equation} between the set of isomorphism classes of indecomposable vector bundles of rank $h:=r\wedge d$ (the g.c.d.\ of $r$ and $d$) and degree $0$ and the set of isomorphism classes of indecomposable vector bundles of rank $r$ and degree $d$ (\cite[Theorem 6]{Atiyah_elliptic_curves}). As a consequence, Atiyah can define a canonical indecomposable vector bundle of rank $r$ and degree $d$, namely \begin{equation*}F_{x_0}(r,d):=\alpha^{\,x_0}_{r,d}(F_{r\wedge d})\end{equation*} where $F_{r\wedge d}$ is the indecomposable vector bundle of rank $r\wedge d$ and degree $0$ whose construction was recalled in \eqref{ses_defining_F_h}. We will call the bundle $F_{x_0}(r,d)$ the Atiyah bundle of rank $r$ and degree $d$ (in particular $F_{x_0}(r,0) = F_r$). Atiyah then obtains the following description of indecomposable vector bundles.

\begin{theorem}[{\cite[Theorem 10]{Atiyah_elliptic_curves}}]\label{Atiyah_indecomp_bdles}
Let us set $h=r\wedge d$, $r'=\frac{r}{h}$ and $d'=\frac{d}{h}$. Then every indecomposable vector bundle of rank $r$ and degree $d$ is isomorphic to a bundle of the form $F_{x_0}(r,d)\otimes \cL$ where $\cL$ is a line bundle of degree $0$. Moreover, $F_{x_0}(r,d)\otimes \cL\simeq F_{x_0}(r,d)\otimes\cL'$ if and only if $(\cL'\otimes\cL^{-1})^{r'}\simeq\cO_X$.
\end{theorem}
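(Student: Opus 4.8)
The plan is to reduce the statement to facts about the single bundle $F_h$ and about line bundles, using Atiyah's bijection $\alpha^{x_0}_{r,d}\colon\cI_X(h,0)\lra\cI_X(r,d)$ from \eqref{Atiyah_map} together with the identification \eqref{isom_with_line_bundles} of $\cI_X(h,0)$ with $\Pic^{\,0}_X$. The one extra input I would use, beyond what has been recorded above, is how $\alpha^{x_0}_{r,d}$ transforms under twisting by degree-zero line bundles; this is among the properties characterising the map in \cite[Theorem 6]{Atiyah_elliptic_curves}, and the precise form needed is
$$\alpha^{x_0}_{r,d}(\cE\otimes\cL^{r'})\ \simeq\ \alpha^{x_0}_{r,d}(\cE)\otimes\cL \qquad\text{for all }\cE\in\cI_X(h,0),\ \cL\in\Pic^{\,0}_X,$$
the exponent $r'=r/h$ being what makes the two sides compatible on determinants. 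Combined with the elementary fact that multiplication by the nonzero integer $r'$ is surjective on the elliptic curve $\Pic^{\,0}_X$, this identity is precisely what produces the $r'$-to-one behaviour asserted in the theorem.

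Granting the displayed identity, both assertions follow formally. For the first, let $\cE$ be indecomposable of rank $r$ and degree $d$; combining surjectivity of $\alpha^{x_0}_{r,d}$, the bijection \eqref{isom_with_line_bundles}, and surjectivity of multiplication by $r'$ on $\Pic^{\,0}_X$, we may write $\cE\simeq\alpha^{x_0}_{r,d}(F_h\otimes\cL^{r'})$ for some $\cL\in\Pic^{\,0}_X$, whence $\cE\simeq\alpha^{x_0}_{r,d}(F_h)\otimes\cL=F_{x_0}(r,d)\otimes\cL$ by the displayed identity and the definition $F_{x_0}(r,d)=\alpha^{x_0}_{r,d}(F_h)$. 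For the second, an isomorphism $F_{x_0}(r,d)\otimes\cL\simeq F_{x_0}(r,d)\otimes\cL'$ rewrites, applying the displayed identity with $\cE=F_h$ to each side, as $\alpha^{x_0}_{r,d}(F_h\otimes\cL^{r'})\simeq\alpha^{x_0}_{r,d}(F_h\otimes(\cL')^{r'})$; injectivity of $\alpha^{x_0}_{r,d}$ then gives $F_h\otimes\cL^{r'}\simeq F_h\otimes(\cL')^{r'}$, and the uniqueness clause of \eqref{isom_with_line_bundles} forces $\cL^{r'}\simeq(\cL')^{r'}$, i.e.\ $(\cL'\otimes\cL^{-1})^{r'}\simeq\cO_X$; reading these implications in reverse gives the converse.

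The only genuinely substantial step --- and the one I expect to be the main obstacle --- is the twisting identity for $\alpha^{x_0}_{r,d}$ itself; everything else above is bookkeeping with the two bijections \eqref{Atiyah_map} and \eqref{isom_with_line_bundles}. Establishing it means revisiting Atiyah's inductive construction of $\alpha^{x_0}_{r,d}$ in \cite{Atiyah_elliptic_curves}, which is bootstrapped from the coprime case by tensoring with $\cL(x_0)$, and tracking at each step how a twist by an element of $\Pic^{\,0}_X$ is transported and where the multiple $r'$ is created. In the coprime case $h=1$ one can bypass this altogether: there every indecomposable bundle of rank $r$ and degree $d$ is stable, \eqref{isom_with_line_bundles} is the identity of $\Pic^{\,0}_X$, and since twisting by $\cL$ multiplies the determinant by $\cL^{\otimes r}$, the determinant isomorphism of Theorem \ref{moduli_space_coprime_case} already shows that $\Pic^{\,0}_X$ acts transitively on $\cI_X(r,d)$ with stabiliser the group of $r$-torsion line bundles, which is exactly the assertion for $r'=r$. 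Finally, upgrading the resulting bijection $\cI_X(r,d)\simeq\Pic^{\,0}_X$ to an isomorphism of complex analytic manifolds requires only the (known) fact that the maps in \eqref{Atiyah_map} and \eqref{isom_with_line_bundles} are morphisms.
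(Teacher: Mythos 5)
The paper offers no proof of this statement---it is quoted directly from \cite[Theorem 10]{Atiyah_elliptic_curves}---so the relevant comparison is with Atiyah's original argument, and your proposal reconstructs that argument correctly: surjectivity of $\alpha^{\,x_0}_{r,d}$, the parametrization \eqref{isom_with_line_bundles} of $\cI_X(h,0)$ by $\Pic^{\,0}_X$, divisibility of the group $\Pic^{\,0}_X$ (so that multiplication by $r'$ is surjective), and the twisting identity $\alpha^{\,x_0}_{r,d}(\cE\otimes\cL^{r'})\simeq\alpha^{\,x_0}_{r,d}(\cE)\otimes\cL$ together yield both the existence statement and the $r'$-torsion ambiguity exactly as you describe. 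The one point to flag is that the twisting identity is asserted rather than proved: it is not a purely formal consequence of the bijectivity of $\alpha^{\,x_0}_{r,d}$, but a lemma Atiyah establishes by tracking twists through the Euclidean-algorithm construction of $\alpha^{\,x_0}_{r,d}$, and your determinant computation only pins down the exponent $r'$ rather than proving the isomorphism. Since you correctly identify this as the single substantial input, state it in the right form, and carry out the remaining bookkeeping (injectivity of $\alpha^{\,x_0}_{r,d}$ plus the uniqueness clause of \eqref{isom_with_line_bundles} for the ``if and only if'') without error, the proposal is sound, though not self-contained at that one step.
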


Thus, as a generalization to \eqref{isom_with_line_bundles}, Theorem \ref{Atiyah_indecomp_bdles} shows that there is a surjective map $\Pic^{\,0}_X\lra \cI_X(r,d)$, whose fiber is isomorphic to the group $T_{r'}$ of $r'$-torsion elements in $\Pic^{\,0}_X$. This in particular induces a bijection between the Riemann surface $\Pic^{\,0}_X/T_{r'}\simeq\Pic^{\,0}_X\simeq X$ and the set $\cI_X(r,d)$ for all $r$ and $d$ and the set $\cI_X(r,d)$ inherits in this way a natural structure of complex analytic manifold of dimension $1$.

\subsection{Relation to semi-stable and stable bundles}

It is immediate to prove that stable bundles (over a curve of arbitrary genus) are indecomposable. Moreover, over an elliptic curve, we have the following result, for a proof of which we refer to Tu's paper.

\begin{theorem}[{\cite[Appendix A]{Tu}}]\label{Tu_obs}
Every indecomposable vector bundle of rank $r$ and degree $d$ over a complex elliptic curve is semi-stable. It is stable if and only $r\wedge d=1$.
\end{theorem}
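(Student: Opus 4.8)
The assertion splits into two parts: first, that every indecomposable $\cE$ of rank $r$ and degree $d$ on a complex elliptic curve $X$ is semi-stable; second, that such an $\cE$ is stable exactly when $r\wedge d=1$. For the first part I would not use Atiyah's classification at all, but argue directly with the Harder--Narasimhan filtration, using that on a genus-$1$ curve Serre duality takes the simple form $\mathrm{Ext}^1(\cF,\mathcal{G})\cong\Hom(\mathcal{G},\cF)^\ast$, since $K_X\cong\cO_X$. For the second part, the coprime case is a purely numerical observation valid in any genus, and the non-coprime case I would read off Atiyah's structure theorem (Theorem~\ref{Atiyah_indecomp_bdles}), or else simply quote the non-existence of stable bundles of non-coprime rank and degree recalled in Section~\ref{semi-stable_bundles}.

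\textbf{Step 1: semi-stability.} Assume $\cE$ is indecomposable but not semi-stable, and let $0=\cE_0\subset\cE_1\subset\cdots\subset\cE_k=\cE$ be its Harder--Narasimhan filtration, with semi-stable quotients $\mathcal{G}_i:=\cE_i/\cE_{i-1}$ of strictly decreasing slopes $\mu_1>\mu_2>\cdots>\mu_k$; since $\cE$ is not semi-stable, $k\geq 2$. The one input is: if $\cF$ and $\mathcal{G}$ are semi-stable with $\mu(\cF)<\mu(\mathcal{G})$, then $\Hom(\mathcal{G},\cF)=0$, because a non-zero map would have non-zero image which is a quotient of $\mathcal{G}$ (so of slope $\geq\mu(\mathcal{G})$) and a subsheaf of $\cF$ (so of slope $\leq\mu(\cF)$), absurd; by Serre duality on $X$ this yields $\mathrm{Ext}^1(\cF,\mathcal{G})=0$ whenever $\mu(\cF)<\mu(\mathcal{G})$. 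Taking $\cF=\mathcal{G}_k$ and $\mathcal{G}=\mathcal{G}_i$ for $i<k$, and using the long exact $\mathrm{Ext}$-sequences along the filtration of $\cE_{k-1}$ (d\'evissage), we get $\mathrm{Ext}^1(\mathcal{G}_k,\cE_{k-1})=0$. Hence the extension $0\to\cE_{k-1}\to\cE\to\mathcal{G}_k\to 0$ splits, so $\cE\cong\cE_{k-1}\oplus\mathcal{G}_k$ with both summands non-zero, contradicting indecomposability. Therefore $\cE$ is semi-stable.

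\textbf{Step 2: the stability criterion.} If $r\wedge d=1$, then $\cE$ is semi-stable by Step~1, and any proper subbundle $\cF$ with $\mu(\cF)=\mu(\cE)$ would satisfy $\deg(\cF)/\rk(\cF)=d/r$ in lowest terms, forcing $r\mid\rk(\cF)$, impossible for $0<\rk(\cF)<r$; so $\cE$ is stable. Conversely, suppose $h:=r\wedge d\geq 2$. Tensoring by a line bundle of degree $0$ preserves ranks, degrees and slopes of subbundles, hence (non-)stability, so by Theorem~\ref{Atiyah_indecomp_bdles} it is enough to show that the Atiyah bundle $F_{x_0}(r,d)=\alpha^{\,x_0}_{r,d}(F_h)$ is not stable. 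Here I would use that Atiyah's transformation $\alpha^{\,x_0}_{r,d}$ carries subbundles to subbundles, multiplying rank by $r'=r/h$ and preserving slopes; applied to the subbundle $\cO_X\subset F_h$ coming from \eqref{ses_defining_F_h}, it produces a subbundle of $F_{x_0}(r,d)$ of rank $r'<r$ and slope $d/r=\mu(F_{x_0}(r,d))$, so $F_{x_0}(r,d)$ is not stable. (Alternatively, one may invoke directly the fact recalled in Section~\ref{semi-stable_bundles} that no stable bundle of rank $r$ and degree $d$ exists over an elliptic curve unless $r\wedge d=1$, which makes the converse immediate.)

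\textbf{Main obstacle.} Step~1 and the coprime half of Step~2 are formal once one has Serre duality and the Harder--Narasimhan formalism, so the genuine content lies in the non-coprime half of Step~2: either one must make precise that Atiyah's map $\alpha^{\,x_0}_{r,d}$ respects subbundles in the way claimed (which amounts to re-reading Atiyah's construction as a composite of suitable equivalences) or one imports the non-existence of stable bundles of non-coprime rank and degree. I expect this to be the only real difficulty; everything else is bookkeeping.
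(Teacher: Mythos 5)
The paper does not actually prove this statement: it is quoted as \cite[Appendix A]{Tu} and used as a black box, so your proposal is doing strictly more than the text it accompanies. Your argument is correct and is essentially the standard one (and, as it happens, close to Tu's own): Step 1 — Harder--Narasimhan filtration plus the vanishing $\mathrm{Ext}^1(\cF,\mathcal{G})\cong\Hom(\mathcal{G},\cF)^*=0$ for semi-stable $\cF,\mathcal{G}$ with $\mu(\cF)<\mu(\mathcal{G})$, which by d\'evissage splits off the last HN quotient — is exactly where the genus-one hypothesis ($K_X\cong\cO_X$) enters, and it is sound. The coprime half of Step 2 is the usual numerical remark. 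The one soft spot is your treatment of the non-coprime half: the assertion that $\alpha^{\,x_0}_{r,d}$ ``carries subbundles to subbundles, multiplying rank by $r'$ and preserving slopes'' is not something you can read off Atiyah's recursive construction without work, and your fallback (quoting the non-existence of stable bundles with $r\wedge d\geq 2$) is essentially equivalent to the statement being proved, since stable bundles are indecomposable. The clean repair is already in the paper: Lemma \ref{rel_between_the_Atiyah_bdles} gives $F_{x_0}(r,d)\simeq F_{x_0}(r',d')\otimes F_h$, and tensoring the subbundle inclusion $\cO_X\hookrightarrow F_h$ of \eqref{ses_defining_F_h} by $F_{x_0}(r',d')$ exhibits a proper subbundle of rank $r'$, degree $d'$ and slope $d/r$ in $F_{x_0}(r,d)$; tensoring by a degree-zero line bundle then handles the general indecomposable bundle via Theorem \ref{Atiyah_indecomp_bdles}. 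With that substitution your proof is complete and self-contained modulo Atiyah's classification results, which the paper quotes independently.
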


\noindent In particular, the Atiyah bundles $F_{x_0}(r,d)$ are semi-stable (and stable if and only if $r\wedge d=1$) and, by Proposition \ref{Atiyah_Krull-Schmidt}, every holomorphic vector bundle over a complex elliptic curve is isomorphic to a direct sum of semi-stable bundles. Next, there is a very important relation between indecomposable vector bundles and stable vector bundles, which will be useful in the next section.

\begin{theorem}[Atiyah-Tu]\label{rel_between_stable_and_indecomp}
Set $h=r\wedge d$, $r'=\frac{r}{h}$ and $d'=\frac{d}{h}$. Then the map
$$\begin{array}{ccc}
\mathcal{M}_X(r',d') & \lra & \cI_X(r,d)\\
\cE' & \lmt & \cE'\otimes F_h
\end{array}$$ is a bijection: any indecomposable vector bundle of rank $r$ and degree $d$ is isomorphic to a bundle of the form $\cE'\otimes F_h$ where $\cE'$ is a stable vector bundle of rank $r'$ and degree $d'$, unique up to isomorphism, and $F_h$ is the Atiyah bundle of rank $h$ and degree $0$.
\end{theorem}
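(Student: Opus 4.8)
The plan is to prove in turn that the assignment $\cE'\mapsto\cE'\otimes F_h$ is well defined (i.e.\ that $\cE'\otimes F_h$ is indecomposable of rank $r$ and degree $d$), injective, and surjective; the genus-one hypothesis will be needed in an essential way only for the last point. \emph{Well-definedness.} The bundle $\cE'\otimes F_h$ has rank $r'h=r$ and degree $\rk(F_h)\deg(\cE')+\rk(\cE')\deg(F_h)=h d'+0=d$. To see that it is indecomposable I would compute $\mathrm{End}(\cE'\otimes F_h)$. Iterating the extension \eqref{ses_defining_F_h} endows $F_h$ with a filtration of length $h$ whose successive quotients are all $\cO_X$; tensoring that filtration of the first factor by $F_h$ and refining, $F_h\otimes F_h$ likewise carries a filtration with all successive quotients $\cO_X$. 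Using the self-duality \eqref{F_h_self-dual} and the trace decomposition $\mathcal{E}nd(\cE')\simeq\cO_X\oplus\mathcal{E}nd_0(\cE')$ (valid since $\rk(\cE')$ is invertible), one obtains
$$\mathrm{End}(\cE'\otimes F_h)=H^0\big(X,\mathcal{E}nd(\cE')\otimes F_h\otimes F_h\big)\simeq H^0(X,F_h\otimes F_h)\oplus H^0\big(X,\mathcal{E}nd_0(\cE')\otimes F_h\otimes F_h\big).$$
The second summand vanishes: it is the space of sections of a bundle filtered by copies of $\mathcal{E}nd_0(\cE')$, and $H^0(X,\mathcal{E}nd_0(\cE'))=0$ because $\cE'$, being stable, is simple. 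The first summand is $\mathrm{End}(F_h)$, and the resulting isomorphism $\mathrm{End}(F_h)\xrightarrow{\ \sim\ }\mathrm{End}(\cE'\otimes F_h)$ is induced by $\psi\mapsto\mathrm{id}_{\cE'}\otimes\psi$, so it is an isomorphism of $\C$-algebras. Since $F_h$ is indecomposable, its endomorphism algebra is local; hence so is $\mathrm{End}(\cE'\otimes F_h)$, and $\cE'\otimes F_h$ is indecomposable.

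\emph{Injectivity.} Tensoring by $\cE'$ the length-$h$ filtration of $F_h$ with quotients $\cO_X$ produces a filtration of $\cE'\otimes F_h$ with all successive quotients $\cE'$; since $\cE'$ is stable of slope $d'/r'=d/r$, this is a Jordan--H\"older filtration, so $\mathrm{gr}(\cE'\otimes F_h)\simeq(\cE')^{\oplus h}$. Hence $\cE'_1\otimes F_h\simeq\cE'_2\otimes F_h$ forces $(\cE'_1)^{\oplus h}\simeq(\cE'_2)^{\oplus h}$, and therefore $\cE'_1\simeq\cE'_2$ by the Krull--Schmidt property (Proposition \ref{Atiyah_Krull-Schmidt}).

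\emph{Surjectivity.} Let $\cE$ be indecomposable of rank $r$ and degree $d$; by Theorem \ref{Tu_obs} it is semi-stable of slope $\mu=d/r$, and by Proposition \ref{poly-stable_bundles} we have $\mathrm{gr}(\cE)\simeq\cE'_1\oplus\cdots\oplus\cE'_h$ with each $\cE'_i$ stable of rank $r'$ and degree $d'$. I would first show that all the $\cE'_i$ are isomorphic. For non-isomorphic stable bundles $\cE',\cE''$ of slope $\mu$ one has $\Hom(\cE',\cE'')=0$, and, by Serre duality on $X$ (where $K_X\simeq\cO_X$ since $X$ has genus one), $\mathrm{Ext}^1(\cE',\cE'')\simeq\Hom(\cE'',\cE')^{\vee}=0$; a standard argument (grouping Jordan--H\"older factors by isomorphism type) then shows that a semi-stable bundle of slope $\mu$ having two non-isomorphic stable Jordan--H\"older factors decomposes as a nontrivial direct sum. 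Indecomposability of $\cE$ thus gives $\mathrm{gr}(\cE)\simeq(\cE')^{\oplus h}$ for a single stable $\cE'$ of rank $r'$ and degree $d'$. It remains to identify $\cE$ among bundles with this property. Here genus one is used once more: by Riemann--Roch, $\dim\mathrm{Ext}^1(\cE',\cE')=\dim H^1(X,\mathcal{E}nd\cE')=\dim H^0(X,\mathcal{E}nd\cE')=1$ (because $\deg\mathcal{E}nd\cE'=0$ and $X$ has genus one), while $\dim\Hom(\cE',\cE')=1$; consequently the full subcategory of semi-stable bundles of slope $\mu$ all of whose Jordan--H\"older factors are $\simeq\cE'$ is equivalent to the category of finite-length $\C[[t]]$-modules, which has exactly one indecomposable object of each length $n\geq 1$. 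Both $\cE$ and $\cE'\otimes F_h$ are indecomposable objects of length $h$ in this subcategory, so $\cE\simeq\cE'\otimes F_h$, as required.

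\emph{The main difficulty} lies in the surjectivity, and more precisely in the two structural facts used there: that a semi-stable bundle of a fixed slope over $X$ splits as a direct sum of ``isotypic'' pieces indexed by the isomorphism classes of its stable Jordan--H\"older factors, and that the indecomposable objects of one such piece are classified by their length. Both rest on the vanishing, respectively one-dimensionality, of the relevant $\mathrm{Ext}^1$ groups, a phenomenon special to genus one (for $g\geq 2$ these groups are much larger, and the structure theory is entirely different). Alternatively, once well-definedness and injectivity are in hand, surjectivity follows formally: $\cE'\mapsto\cE'\otimes F_h$ is then a non-constant holomorphic map between the one-dimensional compact complex manifolds $\cM_X(r',d')$ and $\cI_X(r,d)$ — elliptic curves, by Theorems \ref{moduli_space_coprime_case} and \ref{Atiyah_indecomp_bdles} — and such a map is automatically surjective.
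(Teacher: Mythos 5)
Your route is genuinely different from the paper's. The paper deduces both existence and uniqueness in a few lines from Atiyah's classification (Theorem \ref{Atiyah_indecomp_bdles}: every indecomposable bundle of rank $r$ and degree $d$ is $F_{x_0}(r,d)\otimes\cL$ for some degree-zero $\cL$) combined with Lemma \ref{rel_between_the_Atiyah_bdles} ($F_{x_0}(r,d)\simeq F_{x_0}(r',d')\otimes F_h$), with injectivity coming from matching the $r'$-torsion ambiguity in $\cL$ at the two levels $(r,d)$ and $(r',d')$. You instead argue homologically. Your well-definedness step (locality of $\mathrm{End}(\cE'\otimes F_h)$ via the filtration of $F_h\otimes F_h$ by copies of $\cO_X$ and simplicity of the stable bundle $\cE'$) and your injectivity step (recovering $\cE'$ from $\mathrm{gr}(\cE'\otimes F_h)\simeq(\cE')^{\oplus h}$ together with Proposition \ref{Atiyah_Krull-Schmidt}) are complete and correct, and have the merit of not invoking Lemma \ref{rel_between_the_Atiyah_bdles} at all.

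Surjectivity, however, contains a genuine gap. The isotypy of $\mathrm{gr}(\cE)$ for $\cE$ indecomposable is fine: the vanishing of $\Hom$ and $\mathrm{Ext}^1$ between non-isomorphic stable bundles of the same slope does give the isotypic splitting. But the assertion that $\dim\Hom(\cE',\cE')=\dim\mathrm{Ext}^1(\cE',\cE')=1$ ``consequently'' makes the isotypic block equivalent to finite-length $\C[[t]]$-modules is not a formal consequence of those two dimension counts: finite-length modules over $\C[\epsilon]/(\epsilon^2)$ satisfy the same counts for the unique simple object, yet admit no indecomposable of length $3$. What rescues the claim here is that coherent sheaves on a curve form a hereditary category ($\mathrm{Ext}^2=0$), and one must then run an induction on length showing that $\mathrm{Ext}^1$ of the length-$n$ indecomposable by $\cE'$ remains one-dimensional and that every indecomposable arises as such an extension; this is precisely the existence-and-uniqueness argument for $F_h$ in \cite[Theorem 5]{Atiyah_elliptic_curves} (recalled in \eqref{ses_defining_F_h}), transported to the block of $\cE'$, and it needs to be either carried out or explicitly quoted. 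Your fallback argument (an injective map between the compact connected one-dimensional complex manifolds $\cM_X(r',d')$ and $\cI_X(r,d)$ has open and closed image, hence is onto) is sound in principle, but the continuity, let alone holomorphy, of $\cE'\mapsto\cE'\otimes F_h$ for the complex structure that $\cI_X(r,d)$ receives from Theorem \ref{Atiyah_indecomp_bdles} is not established; unwinding the identifications on both sides, it amounts to Lemma \ref{rel_between_the_Atiyah_bdles} (or to a families/moduli-theoretic argument you would need to supply), so it is not free either.
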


\noindent In particular, $\cI_X(r,d)$ inherits in this way a structure of complex analytic manifold of dimension $r'\wedge d'=1$. This result, which generalizes \eqref{isom_with_line_bundles} in a different direction than Theorem \ref{Atiyah_indecomp_bdles}, can be deduced from Atiyah and Tu's papers but we give a proof below for the sake of completeness. It is based on the following lemma.

\begin{lemma}[{\cite[Lemma 24]{Atiyah_elliptic_curves}}]\label{rel_between_the_Atiyah_bdles}
The Atiyah bundles $F_{x_0}(r,d)$ and $F_{x_0}(r',d')$ are related in the following way: $$F_{x_0}(r,d) \simeq F_{x_0}(r',d') \otimes F_h.$$
\end{lemma}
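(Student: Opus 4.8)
The plan is to deduce the identity from two facts: that $F_{x_0}(r',d')\otimes F_h$ is \emph{itself} an indecomposable bundle of rank $r$ and degree $d$, and that it is the \emph{distinguished} such bundle, namely $F_{x_0}(r,d)$. The second fact is where the normalisation built into Atiyah's maps $\alpha^{x_0}_{r,d}$ must be used, and it is the delicate part.

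\emph{First step.} Write $\cE':=F_{x_0}(r',d')$. As $F_h$ has rank $h$ and degree $0$, the bundle $\cE'\otimes F_h$ has rank $r'h=r$ and degree $hd'=d$. Since $r'\wedge d'=1$, Theorem \ref{Tu_obs} says $\cE'$ is stable, hence simple, so Riemann--Roch on the genus-$1$ curve $X$ gives $\chi(\mathcal{E}nd\,\cE')=0$ and therefore $\dim\mathrm{Ext}^1(\cE',\cE')=\dim\Hom(\cE',\cE')=1$; in other words, self-extensions of $\cE'$ behave just like self-extensions of $\cO_X$. I would then proceed by induction on $h$, the case $h=1$ being trivial since $\cE'\otimes\cO_X=\cE'$ is stable, hence indecomposable. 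For $h\geq 2$, tensoring the defining sequence \eqref{ses_defining_F_h} of $F_h$ by $\cE'$ gives a short exact sequence
$$0\lra\cE'\lra\cE'\otimes F_h\lra\cE'\otimes F_{h-1}\lra 0,$$
whose quotient is indecomposable by induction. Using the decomposition of $\mathcal{E}nd\,\cE'$ and of the tensor products of the $F_k$'s from \cite{Atiyah_elliptic_curves}, one checks that $\dim\Hom(\cE',\cE'\otimes F_k)=\dim H^0(F_k)=1$ for all $k\geq 1$; hence the displayed sequence is non-split, and Serre duality on $X$ (where $K_X\simeq\cO_X$) gives $\dim\mathrm{Ext}^1(\cE'\otimes F_{h-1},\cE')=1$. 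Thus, up to isomorphism, $\cE'\otimes F_h$ is the unique non-split extension of $\cE'\otimes F_{h-1}$ by $\cE'$, and Atiyah's proof of the existence and uniqueness of $F_h$ (\cite[Theorem 5]{Atiyah_elliptic_curves}) --- which uses only $\dim\Hom(\cO_X,\cO_X)=\dim\mathrm{Ext}^1(\cO_X,\cO_X)=1$ and the indecomposability of $F_{h-1}$ --- transcribes verbatim with $\cO_X$ replaced by $\cE'$ to show that $\cE'\otimes F_h$ is indecomposable. So $\cE'\otimes F_h\in\cI_X(r,d)$.

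\emph{Second step.} By Theorem \ref{Atiyah_indecomp_bdles}, $\cE'\otimes F_h\simeq F_{x_0}(r,d)\otimes\cL$ for some $\cL\in\Pic^{\,0}_X$ unique modulo $r'$-torsion, and it remains to see that $\cL$ may be taken trivial. For this I would use the characterisation of Atiyah's bijections from \cite[Theorem 6]{Atiyah_elliptic_curves}: $\alpha^{x_0}_{r,d}$ is obtained from $\alpha^{x_0}_{h,0}=\Id$ by composing the operation of tensoring by the degree-$1$ line bundle $\cL(x_0)$ with elementary modifications at $x_0$, in the order prescribed by the Euclidean algorithm carried out on $(r,d)$ and run backwards from $(h,0)$. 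Each of these operations is compatible with $-\otimes F_h$ (tensoring by $\cL(x_0)$ trivially; an elementary modification at $x_0$ because $F_h$ is locally free and can be trivialised near $x_0$, so that applying it after $-\otimes F_h$ gives the modification of the scaled type attached to $(r,d)$). Since $r'\wedge d'=1$ we have $F_{x_0}(r',d')=\alpha^{x_0}_{r',d'}(F_{r'\wedge d'})=\alpha^{x_0}_{r',d'}(\cO_X)$, and $F_h=F_{x_0}(r\wedge d,0)$, so
$$F_{x_0}(r',d')\otimes F_h=\alpha^{x_0}_{r',d'}(\cO_X)\otimes F_h=\alpha^{x_0}_{r,d}\big(\cO_X\otimes F_h\big)=\alpha^{x_0}_{r,d}(F_h)=F_{x_0}(r,d),$$
which is the assertion of the lemma.

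\emph{The main obstacle} is the second step. It really does require importing the construction of the maps $\alpha^{x_0}_{r,d}$ in more detail than is recorded above, together with the verification that each elementary step in that construction commutes with $-\otimes F_h$. One cannot shortcut it with determinants: $\det(F_{x_0}(r',d')\otimes F_h)=(\det F_{x_0}(r',d'))^{\otimes h}$ is a line bundle of the correct degree $d$, but $\det\colon\cI_X(r,d)\to\Pic^{\,d}_X$ is not injective once $h\geq 2$ --- its fibres have $h^2$ elements --- so matching determinants does not force $\cL=\cO_X$, and a genuine normalisation input is needed. The first step, by contrast, is essentially a transcription of Atiyah's self-extension argument, the only new ingredient being the identity $\dim\mathrm{Ext}^1(\cE',\cE')=1$ available in genus $1$.
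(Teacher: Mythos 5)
The paper itself gives no proof of this lemma --- it is quoted directly from Atiyah (\cite[Lemma 24]{Atiyah_elliptic_curves}) --- so your argument has to stand on its own. Your first step is essentially sound: the rank and degree count, the computation $\dim\Hom(\cE',\cE'\otimes F_k)=\dim H^0(F_k)=1$ via the decomposition $\mathcal{E}nd(\cE')\simeq\bigoplus_{\cL^{\otimes r'}\simeq\cO_X}\cL$, and the resulting non-splitness are all correct; indecomposability of $\cE'\otimes F_h$ does follow (perhaps most cleanly by observing that every indecomposable summand of $\cE'\otimes F_h$ is necessarily of the form $\cE'\otimes F_k$, since $\Hom(\cE'',\cE'\otimes F_h)=0$ for any stable $\cE''\not\simeq\cE'$ of the same slope, and that $\dim\Hom(\cE',\cE'\otimes F_h)=1$ then forces there to be exactly one summand). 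Your parenthetical remark that determinants cannot finish the job is also correct, since the fibres of $\det$ on $\cI_X(r,d)$ have $h^2$ elements.

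The genuine gap is in the second step, exactly where you flag it. Two problems. First, you misdescribe Atiyah's construction: apart from tensoring by $\cL(x_0)$, the Euclidean-algorithm steps defining $\alpha^{x_0}_{r,d}$ are canonical extensions $0\lra\cO_X^{\oplus k}\lra E'\lra E\lra 0$ (and their duals), not elementary (Hecke) modifications at the point $x_0$; so the ``trivialise $F_h$ near $x_0$'' argument has nothing to attach to. Second, and more seriously, for the operations actually used the compatibility with $-\otimes F_h$ is precisely the nontrivial content: tensoring such a canonical extension with $F_h$ produces an extension of $E\otimes F_h$ by $F_h^{\oplus k}$, which for $h\geq 2$ is \emph{not} isomorphic to $\cO_X^{\oplus kh}$, hence is not the extension appearing at the corresponding stage of the construction of $\alpha^{x_0}_{r,d}$; identifying the two total spaces requires a real argument (this is in substance what Atiyah's own proof of Lemma 24 supplies, by induction along the Euclidean algorithm together with his uniqueness statements). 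The middle equality $\alpha^{x_0}_{r',d'}(\cO_X)\otimes F_h=\alpha^{x_0}_{r,d}(\cO_X\otimes F_h)$ in your displayed chain is, after the identification $\cO_X\otimes F_h=F_h$, exactly the statement of the lemma, and the only justification you offer for it is this misdescribed and unestablished step-by-step commutation. Acknowledging the obstacle, as your final paragraph does, does not remove it: as written the proof is incomplete at its decisive point.
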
 

\begin{proof}[Proof of Theorem \ref{rel_between_stable_and_indecomp}]
Let $\cE\in \cI_X(r,d)$. By Theorem \ref{Atiyah_indecomp_bdles}, there exists a line bundle $\cL$ of degree $0$ such that $\cE\simeq F_{x_0}(r,d)\otimes\cL$. By Lemma \ref{rel_between_the_Atiyah_bdles}, $F_{x_0}(r,d)\simeq F_{x_0}(r',d')\otimes F_h$. Since $r'\wedge d'=1$, Theorem \ref{Tu_obs} shows that $F_{x_0}(r',d')$, hence also $\cE':=F_{x_0}(r',d')\otimes \cL$, are stable bundles of rank $r'$ and degree $d'$. And one has indeed $\cE\simeq \cE'\otimes F_h$. Let now $\cE'$ and $\cE''$ be two stable bundles of rank $r'$ and degree $d'$ such that $\cE'\otimes F_h \simeq \cE''\otimes F_h$. Since stable bundles are indecomposable, Theorem \ref{Atiyah_indecomp_bdles} shows the existence of two line bundles $\cL'$ and $\cL''$ of degree $0$ such that $\cE'\simeq F_{x_0}(r',d')\otimes \cL'$ and $\cE'' \simeq F_{x_0}(r',d')\otimes \cL''$. Tensoring by $F_h$ and applying Lemma \ref{rel_between_the_Atiyah_bdles}, we obtain that $F_{x_0}(r,d) \otimes \cL' \simeq F_{x_0}(r,d) \otimes \cL''$ which, again by Theorem \ref{Atiyah_indecomp_bdles}, implies that $\cL'$ and $\cL''$ differ by an $r'$-torsion point of $\Pic^{\,0}_X$. But then a final application of Theorem \ref{Atiyah_indecomp_bdles} shows that $F_{x_0}(r',d')\otimes \cL' \simeq F_{x_0}(r',d') \otimes\cL''$,
i.e.,\ $\cE'\simeq\cE''$.
\end{proof}

Thus, the complex variety $\cI_X(r,d)\simeq\mathcal{M}_X(r',d')\simeq X$ is a $1$-dimensional sub-variety of the $h$-dimensional moduli variety $\ModC\simeq \Sym^h(X)$ and these two non-singular varieties coincide exactly when $r$ and $d$ are coprime. More explicitly, under the identifications $\cI_X(r,d)\simeq X$ and $\ModC\simeq\Sym^h(X)$, the inclusion map $\cI_X(r,d)\hookrightarrow \ModC$, implicit in Theorem \ref{Tu_obs}, is simply the diagonal map \begin{eqnarray*}
X & \lra & \Sym^h(X)\\
x & \lmt & [x,\cdots,x]
\end{eqnarray*} and it commutes to the determinant map. (the latter being, on $\Sym^h(X)$, just the Abel-Jacobi map $[x_1,\cdots,x_h]\longmapsto x_1+\cdots+x_h$; see \cite[Theorem 2]{Tu}).

\subsection{Indecomposable vector bundles over a real elliptic curve}\label{indecomposable_bdles_over_real_elliptic_curves}

Over a real elliptic curve, the description of indecomposable vector bundles is a bit more complicated than in the complex case, because the Atiyah map $\alpha^{\,x_0}_{r,d}$ defined in \eqref{Atiyah_map} is not a real map unless the point $x_0$ is a real point, which excludes the case where $\Xsi=\emptyset$. Of course the case $\Xsi\neq\emptyset$ is already very interesting and if we follow Atiyah's paper in that case, then the Atiyah map $\alpha^{\,x_0}_{r,d}$ is a real map and the Atiyah bundles $F_{x_0}(r,d)$ are all real bundles. In particular, the description given by Atiyah of the ring structure of the set of isomorphism class of all vector bundles (namely the way to decompose the tensor product of two Atiyah bundles into a direct sum of Atiyah bundles, see for instance \cite[Appendix A]{Tu} for a concise exposition) directly applies to the sub-ring formed by isomorphism classes real bundles (note that, in contrast, isomorphism classes of quaternionic bundles do not form a ring, as the tensor product of two quaternionic bundles is a real bundle). To obtain a description of indecomposable bundles over a real elliptic curve which holds without assuming that the curve has real points, we need to replace the Atiyah isomorphism $$\alpha^{\,x_0}_{r,d}: \cI_X(r\wedge d,0) \lra \cI_X(r,d)$$ (which cannot be a real map when $\Xsi=\emptyset$) by the isomorphism $\cI_X(r,d) \simeq \mathcal{M}_X(r',d')$ of Theorem \ref{rel_between_stable_and_indecomp} and show that the latter is always a real map. The first step is the following result, about the Atiyah bundle of rank $h$ and degree $0$, whose definition was recalled in \eqref{ses_defining_F_h}.

\begin{proposition}\label{F_h_is_real}
Let $(X,\si)$ be a real Riemann surface of genus $1$. For any $h\geq 1$, the indecomposable vector bundle $F_h$ has a canonical real structure.
\end{proposition}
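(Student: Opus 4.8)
The plan is to argue by induction on $h$, following Atiyah's inductive construction of $F_h$ via the extension \eqref{ses_defining_F_h}. For $h=1$ one has $F_1=\cO_X$, the trivial line bundle, which carries the tautological real structure $(x,v)\lmt(\si(x),\ov v)$; this is the base case. Assume now that $F_{h-1}$ has been equipped with a canonical real structure $\tau_{h-1}$, and recall that $\cO_X$ carries its tautological one, say $\tau_1$. The starting observation is that, combining \eqref{ses_defining_F_h} with Serre duality on the genus-one curve $X$, the self-duality \eqref{F_h_self-dual} of $F_{h-1}$ and \eqref{dim_space_of_sections_of_F_h}, the extension defining $F_h$ is classified by a nonzero vector in the \emph{one-dimensional} complex vector space $\mathrm{Ext}^1(F_{h-1},\cO_X)\cong H^1(X;F_{h-1})$, and any nonzero vector there yields a bundle isomorphic to $F_h$.

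First I would conjugate the defining extension: applying $\cE\lmt\os{\cE}$ to \eqref{ses_defining_F_h} and then using $\tau_1$ to identify $\os{\cO_X}$ with $\cO_X$ and $\tau_{h-1}$ to identify $\os{F_{h-1}}$ with $F_{h-1}$, one gets a (necessarily non-split, as $\os{F_h}$ is indecomposable) extension of $F_{h-1}$ by $\cO_X$ whose middle term is $\os{F_h}$. This construction is an anti-$\C$-linear involution of $\mathrm{Ext}^1(F_{h-1},\cO_X)\cong\C$, whose fixed set is a real line (a nonzero anti-linear involution of $\C$ always has a real line of fixed points); one may therefore choose the extension class $\xi$ of $F_h$ to lie in this real line. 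For such a $\xi$, the extension defining $F_h$ and its conjugate have the \emph{same} class in $\mathrm{Ext}^1(F_{h-1},\cO_X)$, so there is an isomorphism $\psi\colon\os{F_h}\lra F_h$ restricting to $\tau_1$ on the sub-bundle $\cO_X$ and inducing $\tau_{h-1}$ on the quotient $F_{h-1}$; equivalently, $\psi$ is an anti-holomorphic, fibrewise $\C$-anti-linear bundle map $F_h\lra F_h$ lying over $\si$.

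The hard part will be to arrange that $\psi\circ\os{\psi}=\Id_{F_h}$, i.e.\ that $\psi$ is an honest real structure rather than a ``twisted'' one. The argument used for stable bundles (Proposition \ref{self_conj_stable_bundles}) is not available here, since $\Aut(F_h)$ is strictly larger than $\C^*$ once $h\geq 2$ --- it is the unit group of the $h$-dimensional local ring $\mathrm{End}(F_h)=H^0(X;F_h\otimes F_h)$. On the other hand, because $\psi$ respects the sub-bundle $\cO_X$ and the quotient $F_{h-1}$ and $\tau_1,\tau_{h-1}$ are involutions, the automorphism $c:=\psi\circ\os{\psi}$ is the identity on $\cO_X$ and on $F_{h-1}$, hence of the form $\Id+n$ with $n$ in the one-dimensional space $\Hom(F_{h-1},\cO_X)\cong H^0(X;F_{h-1})$ of nilpotents ($n^2=0$). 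The cocycle identity $\os{c}=\psi^{-1}c\,\psi$ then forces $n$ to be real for the natural real structure on that line, and replacing $\psi$ by $\psi\circ(\Id+n')$ for an appropriate $n'$ kills $c$: the obstruction lives in $H^1(\mathbb{Z}/2\mathbb{Z};\Hom(F_{h-1},\cO_X))$ for the additive group and vanishes by Hilbert~90. The resulting $\tau_h:=\psi$ is a real structure on $F_h$; it is automatically \emph{real}, not quaternionic, because $\tau_h^2=\Id$ by construction (equivalently, a quaternionic structure would restrict to one on the odd-dimensional space $H^0(X;F_h)\cong\C$, which is impossible). Finally $\tau_h$ is canonical, since the only choices involved --- the $\xi$ in the real line of fixed classes and the normalizing $n'$ --- alter $(F_h,\tau_h)$ only up to isomorphism of real bundles; equivalently, the entire chain of extensions issuing from $\cO_X$ is intrinsic, so $F_h$ is defined over $\R$ and $\tau_h$ is the associated descent datum.
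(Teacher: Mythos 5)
Your argument is correct and follows essentially the same route as the paper's proof: induction on $h$, the one-dimensionality of $\mathrm{Ext}^1(F_{h-1},\cO_X)\cong H^1(X;F_{h-1}^{\ *})$ (the paper obtains this from Riemann--Roch rather than Serre duality), and the induced anti-linear involution on that complex line, whose nonzero fixed vectors yield a real $F_h$. The only difference is one of detail: the paper simply asserts that the fixed-point space corresponds to isomorphism classes of real extensions, whereas you carry out the descent explicitly, writing $\psi\circ\os{\psi}=\Id+n$ with $n$ in the nilpotent line $\Hom(F_{h-1},\cO_X)\cong H^0(X;F_{h-1})$ and killing $n$ by an additive Hilbert~90 --- a step the paper leaves implicit.
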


\begin{proof} We proceed by induction. Since $X$ is assumed to be real, $\cO_X$ has a canonical real structure. So, if $h=1$, then $F_h$ is canonically real. Assume now that $h>1$ and that $F_{h-1}$ has a fixed real structure. Following again Atiyah (\cite{Atiyah_analytic_connections}), extensions of the form \eqref{ses_defining_F_h} are parametrized by the sheaf cohomology group $H^1(X;\Hom_{\cO_X}(F_{h-1};\cO_X))=H^1(X;F_{h-1}^{\ *})$. The
uniqueness part of the statement in Atiyah's construction above says that this cohomology group is a complex vector space of dimension $1$, which, in any case, can be checked by Riemann-Roch using Properties \eqref{dim_space_of_sections_of_F_h} and \eqref{F_h_self-dual}. Indeed, since $\deg(F_{h-1}^{\ *})=0$ and $X$ is of genus $g=1$, one has $$h^0(F_{h-1}^{\ *}) - h^1(F_{h-1}^{\ *}) = \deg(F_{h-1}^{\ *}) + (\rk\,F_{h-1}^{\ *})(1-g) =0$$ (where $h^i(\,\cdot\,)=\dim H^i(X;\,\cdot\,)$), so $h^1(F_{h-1}^{\ *})=h^0(F_{h-1}^{\ *})=1$. Now, since $X$ and $F_{h-1}$ have real structures, so does $H^1(X;F_{h-1}^{\ *})$ and the fixed point-space of that real structure corresponds to isomorphism classes of real extensions of $F_{h-1}$ by $\cO_X$. Since the fixed-point space of the real structure of $H^1(X;F_{h-1}^{\ *})$ is a $1$-dimensional real vector space, the real structure of $F_h$ is unique up to isomorphism.
\end{proof}

\noindent Thus, in contrast to Atiyah bundles of non-vanishing degree, $F_h$ is always canonically a real bundle. In particular, $\os{F_h}\simeq F_h$. It is then clear that the isomorphism $$\begin{array}{ccc}
\mathcal{M}_X(r',d') & \lra & \cI_X(r,d)\\
\cE' & \lmt & \cE'\otimes F_h
\end{array}$$ is a real map: $\os{\cE'}\otimes F_h \simeq \os{\cE'} \otimes \os{F_h} \simeq \os{(\cE'\otimes F_h)}$, which readily implies Theorem \ref{indecomp_bdles_over_R}. Moreover, one can make the following observation.

\begin{proposition}\label{self_conj_indecomp_bdles}
Let $\cE$ be an indecomposable vector bundle of rank $r$ and degree $d$ over the real elliptic curve $(X,x_0,\si)$ and assume that $\os{\cE}\simeq \cE$. Then $\cE$ admits either a real or a quaternionic structure.
\end{proposition}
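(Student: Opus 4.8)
The plan is to reduce the statement to Proposition \ref{self_conj_stable_bundles} via the identification $\cI_X(r,d)\simeq\cM_X(r',d')$ of Theorem \ref{rel_between_stable_and_indecomp}, exploiting that this identification is a real map and that $F_h$ is canonically real (Proposition \ref{F_h_is_real}).

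First I would set $h=r\wedge d$, $r'=r/h$, $d'=d/h$. By Theorem \ref{rel_between_stable_and_indecomp} there is a stable bundle $\cE'$ of rank $r'$ and degree $d'$, unique up to isomorphism, such that $\cE\simeq \cE'\otimes F_h$. Since $\os{F_h}\simeq F_h$ by Proposition \ref{F_h_is_real}, one has $\os{\cE}\simeq\os{\cE'}\otimes\os{F_h}\simeq\os{\cE'}\otimes F_h$. Now the hypothesis $\os{\cE}\simeq\cE$ gives $\os{\cE'}\otimes F_h\simeq\cE'\otimes F_h$; as conjugation preserves rank, degree and stability, $\os{\cE'}$ is again a stable bundle of rank $r'$ and degree $d'$, so the injectivity of the map in Theorem \ref{rel_between_stable_and_indecomp} forces $\os{\cE'}\simeq\cE'$.

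Next I would apply Proposition \ref{self_conj_stable_bundles} to the self-conjugate stable bundle $\cE'$: it carries either a real structure $\tau'$ (with $\tau'^2=\Id$) or a quaternionic one (with $\tau'^2=-\Id$). Writing $\tau_{F_h}$ for the canonical real structure on $F_h$, the map $\tau'\otimes\tau_{F_h}$ is an antiholomorphic, fibrewise $\C$-antilinear bundle map covering $\si$, and $(\tau'\otimes\tau_{F_h})^2=\tau'^2\otimes\tau_{F_h}^2=\tau'^2\otimes\Id$. Hence, transporting this through the isomorphism $\cE\simeq\cE'\otimes F_h$, the bundle $\cE$ inherits a real structure when $\cE'$ is real and a quaternionic structure when $\cE'$ is quaternionic, which is exactly the assertion. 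Note that no hypothesis on $\Xsi$ is needed, since Propositions \ref{F_h_is_real} and \ref{self_conj_stable_bundles}, as well as the real-ness of $\cE'\mapsto\cE'\otimes F_h$, all hold without assuming $X^\si\neq\emptyset$.

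\textbf{Main obstacle.} The proof is essentially bookkeeping once the earlier results are in place; the only point requiring a little care is the step $\os{\cE'}\simeq\cE'$, which genuinely uses the \emph{uniqueness} clause (injectivity) in Theorem \ref{rel_between_stable_and_indecomp} rather than merely its surjectivity — conjugating $\cE$ and conjugating $\cE'$ must be matched up through a bijection, not just a surjection. The rest reduces to the elementary fact that tensoring a real (resp.\ quaternionic) structure with a real structure yields a real (resp.\ quaternionic) structure.
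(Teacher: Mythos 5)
Your proposal is correct and follows exactly the paper's own argument: write $\cE\simeq\cE'\otimes F_h$ via Theorem \ref{rel_between_stable_and_indecomp}, use the reality of $F_h$ and the injectivity of that bijection to deduce $\os{\cE'}\simeq\cE'$, invoke Proposition \ref{self_conj_stable_bundles}, and transport $\tau'\otimes\tau_h$ back to $\cE$. No discrepancies to report.
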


\begin{proof}
By Theorem \ref{rel_between_stable_and_indecomp}, we can write $\cE\simeq \cE'\otimes F_h$, with $\cE'$ stable. Therefore, $$\os{\cE} \simeq \os{(\cE'\otimes F_h)} \simeq \os{\cE'} \otimes \os{F_h} \simeq \os{\cE'} \otimes F_h.$$ The assumption $\os{\cE}\simeq \cE$ then translates to $\os{\cE'}\otimes F_h \simeq \cE'\otimes F_h$ which, since the map from Proposition \ref{rel_between_stable_and_indecomp} is a bijection, shows that $\os{\cE'}\simeq \cE'$. As $\cE'$ is stable, the fact that $\cE'$ admits a real or quaternionic structure $\tau'$ follows from Proposition \ref{self_conj_stable_bundles}. If $\tau_h$ denotes the real structure of $F_h$, we then have that $\tau'\otimes\tau_h$ is a real or quaternionic structure on $\cE$, depending on whether $\tau'$ is real or quaternionic.
\end{proof}

\end{document}